\title{Functional calculus and martingale representation formula for
integer-valued random measures}
\author{Pierre Blacque-Florentin \&  Rama Cont}
\newtheoremstyle{roman}
{3pt}
{3pt}
{\upshape}
{}
{\bfseries}
{.}
{.5em}
{}
\theoremstyle{roman}
\newtheorem{theorem}{Theorem}[section]
\newtheorem{definition}[theorem]{Definition}
\newtheorem{proposition}[theorem]{Proposition}
\newtheorem{lemma}[theorem]{Lemma}
\newtheorem{corollary}[theorem]{Corollary}
\newtheorem{remark}[theorem]{Remark}
\newtheorem{assumption}{Assumption}
\newtheorem{example}{Example}
\DeclareMathAlphabet{\mathantt}{OT1}{antt}{m}{n}
\newtheoremstyle{named}{}{}{\itshape}{}{\bfseries}{.}{.5em}{\thmnote{#3's }#1}
\theoremstyle{named}
\newtheoremstyle{named}{}{}{\itshape}{}{\bfseries}{.}{.5em}{\thmnote{#3's }#1}
\theoremstyle{named}
\begin{document}

\maketitle

\begin{abstract}
We construct a pathwise calculus for functionals of integer-valued measures and use it to derive an martingale representation formula with respect to a large class of integer-valued random measures.  
Using these results, we  extend the Functional It\^o Calculus to functionals of integer-valued random measures. We construct a 'stochastic derivative'  operator with respect to an integer-valued random measure, and show it to be the inverse of the stochastic integral with respect to the compensated measure. This stochastic derivative 
yields an explicit martingale representation formula for square-integrable martingales. Our results extend beyond the class of Poisson random measures and allow for random and time-dependent compensators. 

\vspace{0.3cm}
\noindent {\bf Keywords:} Martingale representation, Functional It\^o calculus, Integer-valued random measures, Jump processes, Malliavin calculus, Poisson random measures, martingales.
\end{abstract}

\tableofcontents
\newpage
\section{Introduction}
\label{sec_intro}

Consider a filtered probability space $(\Omega,\mathcal{F},(\mathcal{F}_t), \mathbb{P})$
equipped with a filtration $\mathbb{F}=(\mathcal{F}_t)_{t\geq 0}$ generated by a continuous $\mathbb{R}^d$-valued martingale $X$ and an integer-valued random measure $J$ on $[0,T]\times \mathbb{R}^d$ with
compensator $\mu$. 
We say that the filtration $(X,J,\mathbb{F})$ has the \textit{predictable  representation property} if
for any square integrable $\mathbb{F}$-martingale $M$, there exists predictable processes $\phi:[0,T]\times \Omega\to \mathbb{R}^d$ and $\psi:[0,T]\times\mathbb{R}^d_0\times  \Omega\to $
predictable such that:
\begin{equation}
\label{levyMRT}
M(t) = M(0) + \int_0^t \phi(s).dX(s) + \int_0^t\int_{\mathbb{R}^d
\backslash\{0\}} \psi(s,z) (J-\mu)(ds\ dz).
\end{equation}
where the first integral is an It\^o stochastic integral and the second integral is a stochastic integral with respect to the compensated random measure $\tilde{J}=J-\mu$.
This property holds for instance if the filtration is generated by  a multivariate point process   \cite{J75,davis1976}  or by a  a Brownian motion and an independent Poisson random measure \cite{JS13,davis2005}.
The computation of such representations is of interest in many applications, such as the optimal  control of processes with jumps, mathematical finance  and more generally for the solution solution of backward stochastic differential equations with jumps \cite{confortola2016}.

When $X=W$ is a Wiener process and $J$ is a Poisson random measure, a well-known approach for obtaining such representations is to use the Malliavin calculus on the Wiener-Poisson space \cite{BI83,BGJ87,ishikawa,NUA06,SUV07,wu1987,zhang2009}.
This leads to a `Clark-Ocone' type representation of the integrands $\phi,\psi$ as predictable projections of a (possibly anticipative) process defined as a Malliavin derivative of $M$ on the Wiener-Poisson space:
\begin{equation}
 \phi(s) = {}^pE[\mathbb{D}_t M(T) |\mathcal{F}_t], \qquad \psi(t,z) = {^{p}}E[\mathbb{D}_{(t,z)} M(T)| \mathcal{F}_t],
\end{equation}
where $\mathbb{D}_tM$ represents the Malliavin derivative of $M$ on the Wiener space and $\mathbb{D}_{(t,z)} M$ a Malliavin derivative on the Poisson space for which various constructions have been proposed \cite{BI83,BGJ87,LSUV02,LO05,ishikawa,suzuki2013,wu1987}. 

Jacod, M\'eleard and Protter \cite{JMP00} use Markov semigroup theory. L\'eon et al \cite{LSUV02} introduce a quotient operator. 
While fundamentally different from the ones mentioned above, we point out the fact that there exists an approach where the perturbation consists in shifting the jump-times rather than the jump-sizes. 
Originally introduced  on the Poisson space by Carlen and Pardoux \cite{CP90}, this approach presents the advantage of providing a derivative that satisfies the chain rule and has its uses for studying the absolute continuity of solutions of stochastic differential equations. Using the method of Le\'on-Tudor \cite{LT98}, this approach is generalised to L\'evy processes in Le\'on et al \cite{LSUV14} (see also \cite{rajeev2015}).
In most of the approaches mentioned above, the jump component are driven by  a L\'evy process or a Poisson random measure.


An alternative {\it pathwise} approach for obtaining martingale representation formulae based on the 
Functional It\^o calculus was introduced in \cite{Cont2012,CF13} in the continuous case: this approach provides a  pathwise expression for the integrand and bypasses the predictable projection and the use of anticipative calculi. This approach was developed in \cite{CF13} in the setting of a Brownian filtration.
Our goal in the present work is to extend these results to the case of functionals of cadlag processes and integer-valued random measures.

We first
 develop  a pathwise calculus for functionals of integer-valued measures. We then show that, for a large class of integer-valued random measures, smooth functionals in the sense of this pathwise calculus are dense in the space of square-integrable integrals with respect t the (compensated) random measure. This property allows to construct a closure of the pathwise operators and
  extend the framework of Functional It\^o Calculus \cite{DU09,CF10,Cont2012} to functionals of integer-valued random measures. In particular, we construct a 'stochastic derivative' operator with respect to such integer-valued random measures and
obtain an explicit martingale representation formula for square-integrable martingales with respect to the filtration of the integer-valued random measure. These results hold well beyond the class of Poisson random measures and allow for random and time-dependent compensators.

\paragraph{Outline}

Section \ref{sec_prelim} introduces the space of integer-valued measures and develops a pathwise calculus for functionals of  such measures. We present in particular an inversion formula for (compensated) integrals with respect to an integer-valued measure (Proposition \ref{eq_nablaspec} ).

We use this framework in section \ref{sec_MRF} to construct a 'stochastic derivative' operator with respect to a class of integer-valued random measure. This operator is defined as the closure of the pathwise derivative. As a consequence, we obtain a martingale representation formula with respect to a large class of integer-valued random measures. Our assumptions extend beyond Poisson random measures and allow for random and time-dependent compensators.

In Section \ref{sec_COM} we compare our approach  with Malliavin calculi for jump processes. Section \ref{sec_examples} provides some examples of applications.

Some technical proofs are given in the Appendix.

\maketitle

\section{Functionals of integer-valued measures}
\label{sec_prelim}
\subsection{Integer-valued measures and non-anticipative functionals}

Let $\mathcal{B}(A)$  denote the
Borel $\sigma$-algebra of some set $A$. Also $\mathbb{R}^d_0$ denotes the space $\mathbb{R}^d$
without the origin.
\begin{definition}{(Space of $\sigma$-finite integer-valued measures)}
\label{def_msr}
Denote by $\mathcal{M}([0,T]\times \mathbb{R}^d_0)$ the space of $\sigma$-finite simple
integer-valued measures
on
$[0,T] \times \mathbb{R}^d_0$. For a measure $j:\mathcal{B}([0,T] \times  \mathbb{R}^d_0) \to \mathbb{N} \cup \{ +\infty \}$, we say that

 \begin{equation*}
  j \in \mathcal{M}([0,T]\times \mathbb{R}^d_0)\text{ IFF }j(.) =
\sum\limits_{i=0}^\infty \delta_{(t_i,z_i)}(.) \text{ and is finite on compacts,}
 \end{equation*}
with $(t_i)_{i \in \mathbb{N}} \in [0,T]^\mathbb{N}$ not necessarily distinct nor ordered, and $(z_i)_{i \in \mathbb{N}}\in (\mathbb{R}^d_0)^{\mathbb{N}}$.
For convenience, we denote $\mathcal{M}([0,T]\times \mathbb{R}^d_0)$ by $\mathcal{M}_T$ throughout this article. We equip this space with a $\sigma$-algebra $\mathcal{F}$ such that the mapping $j \mapsto j(A)$ is measurable for all $A \in \mathcal{B}([0,T] \times \mathbb{R}^d_0)$, the Borel $\sigma$-algebra on $[0,T] \times \mathbb{R}^d_0$.
\end{definition}


 For $(t,j) \in [0,T] \times \mathcal{M}([0,T]\times \mathbb{R}^d_0)$,
we denote
$$
 j_t(.) := j(. \cap([0,t]\times \mathbb{R}^d_0)) \qquad ({\rm resp.} \quad j_{t-}(.) := j(. \cap([0,t)\times \mathbb{R}^d_0)) ),
 $$
the restriction of $j$ to $[0,t]]\times \mathbb{R}^d_0$ (resp. $[0,t)\times \mathbb{R}^d_0 $  ).

In this section, we write $\mathcal{D}_T := D([0,T], \mathbb{R}^d)$ for the
space of c\`adl\`ag functions from $[0,T]$ to $\mathbb{R}^d$.

\begin{definition}[Stopped trajectory]
 For $x \in \mathcal{D}_T$, we write 
 $$
x_t(u) = \left\{
    \begin{array}{ll}
        x(u) & \mbox{if } u \leq t, \\
        x(t) & \mbox{if } u > t.
    \end{array}
\right.
$$
 Note that $x_t$ is an element of $\mathcal{D}_T$.
\end{definition}

\begin{definition}{(Space $\Omega$ of processes and canonical process)}
 We identify the \textit{space of processes}
 $$
 \Omega := \mathcal{D}_T \times \mathcal{M}_T
 $$
 equipped with the $\sigma$-algebra $\mathcal{F}$ defined as the $\sigma$-algebra on the product space such that for every $j\in \mathcal{M}_T$ and $A\in\mathcal{B}(\mathbb{R}^d_0)$, $j(A)$ is measurable.

We define the \textit{canonical process} $(X,J)$ as follows: for any $\omega := (x,j) \in \Omega$,
\begin{align*}
(X,J):[0,T]\times\mathcal{B}(\mathbb{R}^d_0) \to \mathbb{R}^d\times\mathbb{R}^d,
t,A,\omega &\mapsto \omega(t) := (x(t),j_t(A)).
\end{align*}

The filtration generated by the canonical process is given by 
$\mathbb{F}^0 := (\mathcal{F}^0_t)_{t\in [0,T]}$ on $\mathcal{M}_T$ as the increasing sequence of $\sigma$-algebras
 $$\mathcal{F}^0_t := = \sigma( (X(s))_{s\in[0,t]},(T_K,Z_k)_{T_K \leq t} ),$$
where the $(T_k,Z_k)$ denote the jump times and size of the measure $J$.
\end{definition}


Now, we define non-anticipative functionals on the space:
\begin{definition}[Non-anticipative functional]
 A non-anticipative functional $F$ is a map
$$F: [0,T]\times \Omega \to \mathbb{R}$$
such that 
\begin{enumerate}
\item $F(t,x,j) = F(t,x_t,j_t),$
\item $F$ is measurable with respect to the product $\sigma$-algebra $\mathcal{B}([0,T])\times \mathcal{F}$. 
\item For all $t \in[0,T]$, $F(t,\cdot)$ is $\mathcal{F}_t$-measurable.
\end{enumerate}
We denote $\mathcal{O}$ the space of such functionals.
\end{definition}

\begin{definition}[Predictable functional]
\label{def_predictable}
 A predictable functional $F$ is a non-anticipative functional such that
 $$F(t,x,j) = F(t,x_t,j_t) = F(t,x_{t-},j_{t-}),$$
and so $F(t,\cdot)$ is $\mathcal{F}_{t-}$-measurable.
 We write $\mathcal{P}$ the space of predictable functional processes, and we have $\mathcal{P}\subset\mathcal{O}$.
\end{definition}

\begin{definition}[Functional fields]
 A \textit{non-anticipative functional field} $\Psi$ is a map
 $$\Psi:[0,T]\times\mathbb{R}^d_0 \times \Omega \to \mathbb{R}$$
 such that $\Psi(t,z,x,j) = \Psi(t,z,x_t,j_t)$, such that 
 $\Psi$ is measurable with respect to the product $\sigma$-algebra 
 $\mathcal{B}([0,T]\times \mathbb{R}^d_0)\times \mathcal{F}$, and for 
  all $(t,z) \in[0,T]\times \mathbb{R}^d_0$, $\Psi(t,z,\cdot)$ is $\mathcal{F}_t$-measurable. 
  We denote $\mathcal{O}_f$ the space of such functionals.

  Similarly, we call \textit{predictable functional field} any $\Psi \in \mathcal{O}_f$ such that
  $$\Psi(t,z,x,j) = \Psi(t,z,x_{t-},j_{t-}),$$
  and we denote by $\mathcal{P}_f$ the space of such predictable functional fields.
\end{definition}

\begin{example}[Integral functionals] Given a measurable function $g:[0,T]\times \mathbb{R}^d_0\to \mathbb{R}$ with compact support in 
$[0,T] \times \mathbb{R}^d_0$, the integral functional $$(t,x,j) \mapsto F_g(t,x,j) = \int_0^t\int_{\mathbb{R}^d_0} g(s,z)
j(ds dz),$$
  defines a non-anticipative functional.
\end{example}

\begin{definition}[Vertical perturbation in $x$]
 The vertical perturbation of a c\`adl\`ag function $x \in \mathcal{D}_t$,
$t<T$ is given by
$$x^h_t(\cdot) = x_t(\cdot) + h \mathds{1}_{[t,\infty)}(\cdot).$$
\end{definition}

\begin{definition}[Vertical derivative]
 A non-anticipative functional process is said to be vertically differentiable if, for
$e \in \mathbb{R}^d$ the map
$$
e \mapsto F(t,x^e_t,j_t)
$$
is differentiable.
For $(e_i)_{i=1..d}$ the canonical basis of $\mathbb{R}^d $, the resulting gradient vector 
$$\lim_{h \to 0} \frac{F(t,x_t^{he_i},j_t)-F(t,x_t,j_t)}{h}$$
is well defined for all $t$ and all $i$. It is
called the vertical derivative of $F$ at $t$ with respect to $x$, and is noted
$\nabla_x F(t,x_t,j_t)$.
\end{definition}
A key object in our study is the following functional derivative with respect to the jump component $J$:
 \begin{definition}
 \label{def_nabla}
 For a non-anticipative functional $F$ and $(t,z)\in (0,T\times \mathbb{R}^d_0$,  the following difference operator:
 \begin{equation}
\label{vertperturb}
 \nabla_{(t,z)} F (t,x,j) = F(t,x_{t},j_{t-}+\delta_{(t,z)})-F(t,x_{t}, j_{t-}).
\end{equation}
  Then the  operator 
  \begin{align*}
   \nabla_J  : \mathcal{O} &\to \mathcal{P}_f,\\
	    F &\mapsto \nabla  F
  \end{align*}
 defined by
    $$(\nabla_J  F)(t,z,x, j) = \nabla_{(t,z)} F(t,x,j) =  F(. ,x, j_{t-}+\delta_{(t,z)})-F(.,x,j_{t-})$$
  maps a non-anticipative functional $F$ to a predictable functional field denoted $\nabla_J F$.
 \end{definition}
  
 The following proposition summarises key properties of the pathwise operators $\nabla_X$ and $\nabla_J$:
\begin{proposition}[Properties of the pathwise derivative] 
\label{eq_nablaspec}
The following hold true:
\begin{enumerate}
\item If $F$ is a {\it predictable} functional (in the sense of Definition \ref{def_predictable}) then $$\nabla_X F=0 \qquad {\rm and}
\nabla_J F=0.$$
\item Compensated integrals: Let $F$ be a non-anticipative functional of the form
$$F(t,x_t, j_t) = \int_0^t\int_{\mathbb{R}^d_0}\psi(s,y,x_{s-},j_{s-}) (j-\mu)(ds\ dy)
$$
where  $\psi:  [0,T]\times\mathbb{R}^d_0\times \Omega\mapsto \mathbb{R}$ is an optional random function with compact support in $[0,T]\times\mathbb{R}^d_0$ and $\mu$ is a predictable measure satsfying Def. \ref{def_predictablemeasure}. Then
 \begin{equation}
  \nabla_{J} F(t,z,x_t, j_t) = \psi(t,z,x_{t-}, j_{t-}).
 \end{equation}
and $\nabla_X F = 0$.

\end{enumerate}
\end{proposition}
 
\begin{proof}
\begin{enumerate}                     

\item If $F$ is a vertically differentiable predictable functional then, using Definition \ref{def_predictable}, for all $(t,x, j)\in [0,T]\times\Omega$, we have that there exists a predictable functional $\phi$ such that
$$
G(t,j) = \phi(t, x_{t-},j_{t-}). 
$$ 
So
\begin{align*}
\nabla_x G(t,x,j) &= \lim_{h\to 0} \frac{1}{h} (G(t, x^h_{t-},j_{t-})-G(t,x_{t-},j_{t-})),\\
&= \lim_{h\to 0} \frac{1}{h}(\phi(t,(x_{\cdot}+h\mathds{1}_{[t,\infty)}(\cdot))_{t-},j_{t-})- \phi(t, x_{t-},j_{t-})),\\
&=  \lim_{h\to 0} \frac{1}{h}(\phi(t, x_{t-},j_{t-}) - \phi(t, x_{t-},j_{t-})) = 0.
\end{align*}

\item If $F$ is a predictable functional then, using Definition \ref{def_predictable}, for all $(t,x,j)\in [0,T]\times\Omega$, we have that there exists a predictable functional $\phi$ such that
$$
G(t,j) = \psi(t, x_{t-},j_{t-}). 
$$ 
So for all $z\in\mathbb{R}^d_0$,
\begin{align*}
\nabla_{(t,z)}  G(t,x,j) &= G(t, x_{t-},j_{t-}+\delta_{t,z}(\cdot))-G(t,j_{t-}),\\
&=\psi(t,x_{t-},(j_{t-}+\delta_{t,z}(\cdot))_{t-})- \psi(t, x_{t-},j_{t-}),\\
&=\psi(t, x_{t-},j_{t-}) - \psi(t, x_{t-},j_{t-}) = 0.
\end{align*}

\item
Since $\psi$ has compact support in $\mathbb{R}^d_0$, $0\not\in \overline{\text{supp}(\psi)}$ and so $j$ can only have a finite number of jumps on the support of $\psi$. Moroever, since $\mu$ is $\sigma$-finite, the integral of $\psi$ with respect to $j-\mu$ is finite. 
 Using the pathwise predictability of $\psi(s,y,x_{s-},j_{s-})$ and $\mu$, we have
  \begin{equation}
   F(t,x_{t-}, j_{t-} + \delta_{(t,z)})= F(t,x_{t-},j_{t-}) + \psi(t,z,x_{t-},j_{t-}),
  \end{equation}
	So for all $t$ and $z$,
	$$
	\nabla_{(t,z)}  F(t,j_t) = \psi(t,z,x_{t-},j_{t-}).
	$$
Furthermore, $F$ is predictable in $x$, and so $\nabla_x F$ = 0 by part 1 of the current proposition.
\end{enumerate}
 \end{proof}
So $\nabla$ is an operator that maps a non-anticipative functional to a predictable functional, and if $F$ is the functional in Proposition \ref{eq_nablaspec}, then $\nabla  F$ recovers the predictable version of integrand.
In particular, if $\psi$ belongs to the set of predictable fields having compact support, in the sense that for all $x$ and $j$, $z\mapsto\psi(t,z,x,j)$ equals zero on a neighbourhood of zero -- we denote this space $\mathcal{P}_{fc}$ -- then for all $(t,j,z)$, $\psi(t,z,j_t) = \psi(t,z,j_{t-})$, $\nabla$  is the inverse of the integral operator defined by
 \begin{align*}
 \quad & \mathcal{P}_{fc} \to  \mathcal{O},\\
	 & \psi \mapsto \int_0^. \int_{\mathbb{R}^d_0} \psi(s,z,j_{s-})(j-\mu)(ds\ dy),
 \end{align*}

\subsection{Simple predictable
functionals and compensated integrals}

\begin{definition}{($\sigma$-finite predictable measure)}

\label{def_predictablemeasure}
 We call $\sigma$-finite predictable measure any $\sigma$-finite measure $$\mu:\mathcal{B}([0,T]\times
\mathbb{R}^d_0) \times \Omega\to \mathbb{R}^+$$ which satisfies for $A \in \mathcal{B}([0,t]\times \mathbb{R}^d_0)$:
 $$\mu(A, x, j) = \mu(A, x_{t-},j_{t-}).$$
 \end{definition}
 
\begin{definition}{(Stopping time)}
 A stopping time $\tau$ is a non-anticipative mapping
$$\tau: \Omega \to [0,T] $$
such that for any $t \in [0,T]$
$$\mathds{1}_{\tau(x,j)\leq t} = \mathds{1}_{\tau(x_t,j_t)\leq t}.$$
Moreover, $\tau$ is a predictable stopping-time if
$$\mathds{1}_{\tau(x,j)\leq t} = \mathds{1}_{\tau(x_{t-},j_{t-})\leq t}.$$
\end{definition}

\begin{example}
for all $\epsilon >0$, $Z \in \mathcal{B}(\mathbb{R}^d_0)$, $0 \not\in \overline{Z}$ (the closure of $Z$), and a $\sigma$-finite predictable measure $\mu$,
$$\tau^{\epsilon}(x,j,Z) = \inf\{t \in [0,T]| \mu([0,t]\times Z,x,j) \geq
\epsilon\}\wedge T$$
is a stopping time. If $\mu$ has no atom with respect to
time, $\tau^\epsilon$ is also predictable.
\end{example}
For convenience later on, we write 
\begin{equation}
\label{eq_c}
C^{\alpha}_t(j) = j(\{s\}\times \{\frac{1}{\alpha}< |z|\leq \alpha\}).
\end{equation}
for $\alpha \geq 1 $.

\subsection{A pathwise martingale representation formula for multivariate point processes}
\label{sec_multivariate}
We are now able to state the martingale representation formula for the case of multivariate point processes, i.e. when the jump measure has finite activity.

Consider now a  probability measure $\mathbb{P})$ on the canonical space under which $J$ is
a multivariate point  process on $\mathbb{R}^d_0$  i.e. an integer valued measure such that $J([0,t]\times\mathbb{R}^d_0) < \infty$ for all $t,\omega$. Moreover let
$$
\mathcal{F}_t = \mathcal{F}_0 \vee \sigma((T_k,Z_k)|T_k \leq t)
$$
be the (completed) natural filtration of $J$:
where the $(T_k,Z_k)$ denote  respectively the jump times and jump sizes of $J$.

In such a setting, the martingale representation theorem holds: as shown by Jacod \cite[Prop 54]{J75}), a right continuous process $M$ is an $(\mathcal{F}_t )_{t\in [0,T]}$ local martingale if and only if there exists $V$ predictable such that
$$
M(t) = M(0)+ \int_0^t\int_{E} \Psi(s,z) \widetilde{J}(ds\ dz)
$$
and
$$
\int_0^t\int_{E} |\Psi(s,z)| \widetilde{J}(ds\ dz) < \infty \text{ a.s. on } \{t<T_{\infty}\},
$$
where $T_{\infty} = \lim_{n\to \infty} T_n$. Notice that in this context, the integral with respect to $\widetilde{J}$ is defined pathwise. 

The following result  shows that this martingale representation is precisely given by
the pathwise operator $\nabla_J$ defined in the previous section and justifies a posteriori the definition of $\nabla_J$: 
\begin{proposition}[Martingale representation for multivariate point processes]
$$
\forall (t,z)\in [0,T)\times \mathbb{R}^d, \nabla_J M(t,z)= \Psi(t,z)\quad \mathbb{P}-a.s.\ 
$$ 
\end{proposition}
\section{Stochastic derivative and martingale representation  formula}
\label{sec_MRF}
In the previous subsection, we have been able to provide a pathwise martingale representation formula in the special case of multivariate point processes. We now aim at obtaining a martingale representation formula for the more general case, where the filtration is generated by a measure with infinite activity and a continuous martingale.

We now equip the space $(\Omega,\mathcal{F})$ with a probability measure $\mathbb{P}$ such that $X$ defines a continuous martingale and $J$ a jump-measure such that $\int_0^T\int_{\mathbb{R}^d_0} |z|^2 J(dsdz)< \infty$ a.s., with (predictable) compensator $\mu$ atomless in time, i.e. $\mu(\{s\}\ dy, \omega) =0$ for any $s\in [0,T]$. Moreover, we complete the filtration $\mathbb{F}^0$ by the $\mathbb{P}$-null sets and write $\mathbb{F}:=(\mathcal{F}_t)_{t\in[0,t]}$ for the completed filtration.

We denote by $\widetilde{J}=J-\mu$ the compensated random measure and introduce the following spaces:
\begin{align*}
 \mathcal{L}^2_{\mathbb{P}}([X], \mu) &= \left\{(\phi , \psi) \middle
| \phi: [0,T]\times\Omega \to \mathbb{R}^d \text{ and }\psi: [0,T]\times \mathbb{R}^d\times \Omega \to \mathbb{R}^d
\text{ both predictable and}\right.\\
&\left. \|(\phi,\psi)\|^2_{\mathcal{L}^2_{\mathbb{P}}([X], \mu)}:=E\left[\int_0^t \phi^2(s)
d[X](s)+ \int_0^t \int_{\mathbb{R}^d_0} \psi^2(s,y) \mu (ds\ dy)\right]<\infty \right\},
\end{align*}
and
\begin{equation*}
 \mathcal{I}^2_{\mathbb{P}}([X], \mu) = \left\{Y = \int_0^. \phi(s,)dX(s) +
\int_0^.\int_{\mathbb{R}^d_0} \psi(s,y)
\widetilde{J}(ds\ dy)\middle| (\phi,\psi) \in \mathcal{L}^2(X, \mu)
\right\},
\end{equation*}
equipped with the norm
\begin{equation*}
 \|Y\|^2_{\mathcal{I}^2_{\mathbb{P}}([X], \mu)} = E\left[ |Y(T)|^2\right].
\end{equation*}
Then we can write
\begin{equation*}
 \mathcal{L}^2_{\mathbb{P}}([X],\mu) = \mathcal{L}^2_{\mathbb{P}}([X]) \oplus \mathcal{L}^2_{\mathbb{P}}(\mu)
\end{equation*}
and
\begin{equation*}
 \mathcal{I}^2_{\mathbb{P}}([X],\mu) = \mathcal{I}^2_{\mathbb{P}}([X]) \oplus \mathcal{I}^2_{\mathbb{P}}(\mu),
\end{equation*}
with
\begin{equation*}
 \mathcal{L}^2_{\mathbb{P}}([X]) :=\left\{\phi: [0,T]\times \Omega \to
\mathbb{R} \text{ predictable}\middle| \|\phi\|_{\mathcal{L}^2_{\mathbb{P}}(X)}:= E[\int_0^T \phi^2(t) d[X](t) ]< \infty \right\},
\end{equation*}
 and
 \begin{equation*}
  \mathcal{I}^2_{\mathbb{P}}([X]):= \left\{ Y:[0,T] \times \Omega \to \mathbb{R}\middle| Y(t) =
\int_0^t \phi(s) dX(s), \phi \in \mathcal{L}^2_{\mathbb{P}}([X])\right\},
 \end{equation*}
equipped with the norm
$$\|Y\|_{\mathcal{I}^2_{\mathbb{P}}([X])} := E[|Y|^2_T],$$
as well as
\begin{equation*}
 \mathcal{L}^2_{\mathbb{P}}(\mu) :=\left\{\psi: [0,T]\times \mathbb{R}^d_0 \times \Omega \to
\mathbb{R}^d \text{ predictable}\middle| E[\int_0^T\int_{\mathbb{R}^d_0} \psi(s,y)^2\mu(ds\ dy) ]< \infty \right\},
\end{equation*}
equipped with the norm $$\|\psi\|^2_{\mathcal{L}^2_{\mathbb{P}}(\mu)} :=
E[\int_0^T\int_{\mathbb{R}^d_0} \psi(t,z)^2\mu(dt
dz)],$$
 and
 \begin{equation*}
  \mathcal{I}^2_{\mathbb{P}}(\mu):= \left\{ Y:[0,T] \times \Omega \to \mathbb{R}\middle| Y(t) =
\int_0^t\int_{\mathbb{R}^d_0} \psi(s,y) \widetilde{J} (ds\ dy), \psi\in \mathcal{L}^2_{\mathbb{P}}(\mu)\right\},
 \end{equation*}
equipped with the norm
$$\|Y\|^2_{\mathcal{I}^2(\mu)} := E[|Y(T)|^2].$$

 The  (It\^o) stochastic  integral operator  with respect to $(X, \tilde{J})$ is defined as
 \begin{align*}
  I_{X,\widetilde{J}} : \mathcal{L}^2_{\mathbb{P}}([X],\mu) &\to \mathcal{I}^2_{\mathbb{P}}([X],\mu),\\
  (\phi, \psi) &\mapsto \int_0^. \phi(s) dX(s) +\int_0^.\int_{\mathbb{R}^d_0}
\psi(s,y) \widetilde{J}(ds dy).
 \end{align*}

We will now construct a {\bf stochastic derivative}, the `adjoint' operator of this stochastic integral operator, as the closure of the pathwise operator $(\nabla_X,\nabla_J)$ defined above.

\subsection{Stochastic derivative with respect to an integer-valued random measure}
Consider the following
space of simple predictable functionals:
\begin{definition}[Couples of simple functionals on the product space] 
\label{def_S}
We consider the space
$\mathcal{S}$ of couples $(\phi,\psi)$ such that
\begin{align}
\phi:&  [0,T]\times \Omega \to \mathbb{R}^d,\\
\psi:& [0,T]\times \mathbb{R}^d_0\times \Omega \to \mathbb{R}^d
\end{align}
and
\begin{enumerate}
   \item $\phi$ has the following form
 \begin{equation*}
  \phi(t,x_t,j_t) = \sum\limits_{\substack{i=0\\k=1}}^{I}
\phi_{i}(x_{\tau_i},j_{\tau_i})\mathds{1}_{(\tau_i,\tau_{i+1}]}(t)
 \end{equation*}
with the $\tau_i$ predictable stopping times. 

\item Moreover, there exist $I$ grids $0\leq t^i_1 \leq t^i_2 \leq \dots \leq t^i_{n(i)} =T$ such that for any $i\in 1..I$
\begin{align*}
 \phi_{i}(j_{t_i}) = f_{i}&((x(t^i_u),C^{\epsilon_v}_{t^i_u}(j))_{u\in 1..U, v\in 1..V}),
\end{align*}
where $f_{i}:\mathbb{R}^U \times \mathbb{R}^U\times \mathbb{R}^V\to \mathbb{R}$ is Borel-measurable, $0 \leq t^i_1 \leq
\cdots \leq t^i_U\leq \tau_i$ and $1\leq \epsilon_1\leq \cdots \leq \epsilon_V$. ($C^{\epsilon_m}_{t_l}$ is defined as in Equation (\ref{eq_c}).)

 \item $\psi$ has the following form
 \begin{equation*}
  \psi(t,z,j_t) = \sum\limits_{\substack{m=0\\k=1}}^{M,K}
\psi_{mk}(x_{\kappa_m},j_{\kappa_m})\mathds{1}_{(\kappa_m,\kappa_{m+1}]}(t)
\mathds{1}_{A_k}(z)
 \end{equation*}
with $A_k \in \mathcal{B}([0,T]\times \mathbb{R}^d)$, $0 \not\in
\overline{A}_k$ (the closure of the $A_k$), the $\kappa_m$ are predictable stopping times (allowed to depend on the $Z_k$).

\item Moreover, there exist $M$ grids $0\leq t^m_1 \leq t^m_2 \leq \dots \leq t^m_{n(m)} =T$ such that for any $m \in 1..M$:
\begin{align*}
 \psi_{mk}(x_{\kappa_m},j_{\kappa_m}) = g_{mk}&((x(t^m_p),C^{\epsilon_q}_{t^m_p}(j))_{p\in 1..P, q\in 1..Q}),
\end{align*}
where $g_{mk}:\mathbb{R}^P \times \mathbb{R}^P \times \mathbb{R}^Q\to \mathbb{R}$ is Borel-measurable, $0 \leq t^m_1 \leq
\cdots \leq t^m_P\leq \kappa_m$ and $1\leq \epsilon_1\leq \cdots \leq \epsilon_Q$.
  
\end{enumerate}
\end{definition}
We denote $I_{X,\widetilde{J}}(\mathcal{S})$ for the
set of processes that are stochastic integrals of
processes in $\mathcal{S}$.

 \begin{lemma}
\label{Radonreq2}
 The set of random variables
\begin{equation}
 f((X(t_i),C^{k_j}_{t_i}(J))_{i\in 1..n, j \in 1..p})
\end{equation}
with $f:\mathbb{R}^n \times \mathbb{R}^n \to \mathbb{R}$ bounded, and the $C$ as in equation
(\ref{eq_c}), is dense in $L^2(\mathcal{F}_T,\mathbb{P})$ (the space of random variables with finite second moment).
\end{lemma}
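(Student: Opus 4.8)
The plan is to follow the same strategy as in the proof of Lemma \ref{Radonreq}, adapting the argument to the enlarged filtration $\mathbb{F}^p$ which now also carries the continuous component $X$. First I would fix a countable dense subset $(t_i)_{i\in\mathbb{N}}$ of $[0,T]$ and a countable dense subset $(k_j)_{j\in\mathbb{N}}$ of $[1,\infty)$, and introduce the nested family of completed sub-$\sigma$-algebras
$$\mathcal{F}^{n,m} = \sigma\left((X(t_i),C^{k_j}_{t_i}(J))\,\middle|\,i\in 1..n,\ j\in 1..m\right),$$
generated by the finitely many cylindrical coordinates. Exactly as in the purely discontinuous case, these satisfy the lattice inclusions $\mathcal{F}^{n,m}\subset\mathcal{F}^{n+1,m}$ and $\mathcal{F}^{n,m}\subset\mathcal{F}^{n,m+1}$.

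The key new point is to verify that $\mathcal{F}^p_T=\sigma(X_s,J_s,\ s\in[0,T])$ is the smallest $\sigma$-algebra containing all the $\mathcal{F}^{n,m}$. For the jump coordinates this is the argument already used in Lemma \ref{Radonreq}: the maps $j\mapsto C^{k_j}_{t_i}(j)$ recover the atoms of $j$ as $i,j$ range over the dense sets. For the continuous coordinate I would invoke the c\`adl\`ag regularity of $X$: since $X$ is right-continuous, $X(t)=\lim_{t_i\downarrow t}X(t_i)$ pointwise along the dense sequence, so every evaluation $X(t)$ is measurable with respect to $\sigma(X(t_i),\ i\in\mathbb{N})$, and hence the coordinate projections on $\mathcal{D}_T$, which generate its Borel $\sigma$-algebra, are recovered from the countable family. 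Because $\mathcal{F}^p$ is the product Borel $\sigma$-algebra on $\mathcal{D}_T\times\mathcal{M}_T$ and both factors are separable, by the remark following the definition of $\Omega^p$, the countable collection $(X(t_i),C^{k_j}_{t_i}(J))$ indeed generates $\mathcal{F}^p_T$.

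With generation established, the remainder is verbatim the argument of Lemma \ref{Radonreq}. For $g\in L^2(\mathcal{F}^p_T,\mathbb{P})$ I would write $g=E[g\mid\mathcal{F}^p_T]$ and apply the martingale convergence theorem along the nested family to obtain
$$g=\lim_{m\to\infty}\lim_{n\to\infty}E[g\mid\mathcal{F}^{n,m}]\quad\text{in }L^2.$$
Lemma 1.2 in Crauel \cite{CR03} replaces each conditional expectation by a version measurable with respect to the non-completed $\sigma$-algebra, and the Doob-Dynkin lemma (\cite{KA02}, p. 7) represents it as a Borel function $g_{n,m}$ of the generating coordinates $(X(t_i),C^{k_j}_{t_i}(J))_{i\in 1..n,\,j\in 1..m}$. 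This exhibits $g$ as an $L^2$-limit of cylindrical functionals of the required form.

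The only remaining step, and the one I expect to need the most care, is ensuring that the approximating functions $f$ can be taken bounded as stated: the Borel functions $g_{n,m}$ produced by Doob-Dynkin need not be bounded, so I would truncate $g_{n,m}$ at level $\pm N$ and let $N\to\infty$, using dominated convergence (each $g_{n,m}\in L^2$) to recover the limit with bounded $f$. Apart from this truncation, the proof is a direct transcription of the purely discontinuous case, the genuinely new ingredient being the recovery of the continuous path's $\sigma$-algebra from evaluations on a dense time set via right-continuity of $X$.
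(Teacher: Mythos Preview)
Your proposal is correct and follows essentially the same route as the paper's own proof: dense time and amplitude parameters, the nested sub-$\sigma$-algebras $\mathcal{F}^{n,m}$, martingale convergence, Crauel's lemma to pass to the uncompleted filtration, and Doob--Dynkin to obtain the cylindrical representation. In fact you supply two pieces of justification that the paper leaves implicit---the argument via right-continuity of $X$ that the countable evaluations generate $\mathcal{F}^p_T$, and the truncation step to enforce boundedness of $f$---so your version is, if anything, slightly more complete.
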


\begin{proof}
Let $(t_i)_{i \in \mathbb{N}}$ be a dense subset of $[0,T]$, and $(k_j)_{j \in \mathbb{N}}$ a dense subset of $[1, \infty)$.
Letting $(T_k,Z_k)$ denote the jump times and sizes of the jump measure $J$, define 
 $$\mathcal{F}^{0,n,m} =\sigma(X(t_i),(T_k,Z_k)| t_i \leq t_n, T_k\leq t_n, \frac{1}{k_m}< Z_k \leq k_m),$$
the (non-completed) filtration
generated by the $(X(t_i), C^{k_l}_{t_i}(J_t))_{i \in 1,\cdots,n, l\in 1,\cdots,m}$. We denote by $\mathcal{F}^{n,m}$ the completion of $\mathcal{F}^{0,n,m}$. 
One has

\begin{center}
\begin{tabular}{ccc}
 $\mathcal{F}^{n,m}$ & $\subset$ & $\mathcal{F}^{n,m+1}$\\
 $\cap$ & &$\cap$\\
 $\mathcal{F}^{n+1,m}$ & $\subset$ & $\mathcal{F}^{n+1,m+1}$
\end{tabular}
\end{center}
Moreover, $\mathcal{F}_T$ is the smallest $\sigma$-algebra
containing all the $\mathcal{F}^{n,m}$.
 For $g \in L^2(\mathcal{F}^0_t,\mathbb{P})$,
 \begin{align*}
  g &= E[g| \mathcal{F}^0_T].
 \end{align*}
The martingale convergence theorem yields
\begin{equation*}
 g = \lim_{m \to \infty} \lim_{n \to \infty} E[g |\mathcal{F}^{n,m}],
\end{equation*}
Moreover, for each $n$ and $m$, there exists a $\mathcal{F}^{0,m,n}$-measurable random variable $h_{nm}$ (i.e. a random variable 
measurable with respect to the non-completed filtration) such that, $\mathbb{P}$-.a.s.,
$$E[g |\mathcal{F}^{n,m}] = h_{nm}.$$
(see e.g. Lemma 1.2. in Crauel \cite{CR03}.)

Finally, the Doob-Dynkin lemma (\cite{KA02}, p. 7) applied to $h_{nm}$ yields that for each couple $(n,m)$, there exists a
Borel-measurable $g_{nm} : \mathbb{R}^n \times \mathbb{R}^n\times \mathbb{R}^m \to \mathbb{R}$ with
\begin{equation*}
 E[g| \mathcal{F}^{n,m}] = g_{n,m}((X(t_i), C^{k_j}_{t_i}(j))_{i\in 1..n, j \in 1..m}), \mathbb{P}-a.s.
\end{equation*}
\end{proof}
 
\begin{lemma} $\{ (\nabla_X Y, \nabla_J  Y),\quad Y\in I_{X,\widetilde{J}}(\mathcal{S})  \}$
is dense in $\mathcal{L}^2_{\mathbb{P}}([X] \otimes \mu)$.
\end{lemma}

\begin{proof}
Let $(\phi,\psi)$ be some element of $\mathcal{S}$, and consider a continuous process $Y$ of the form
 $$Y^c(t) = \int_0^t \phi(s) dX(s).$$
 Notice that
the integral is well defined in a pathwise sense, as this is just a Riemann sum:
 $$Y^c(t,\omega) = F(t,X_t,J_t),$$ with
 \begin{align*}
  F(t,x_t,j_t) &= \int_0^t \phi(s,x_{s-},j_{s-})dx(s)\\
  &=\sum\limits_{i=1}^{I} \phi_{i}(x_{\tau_i},j_{\tau_i})
\mathds{1}_{(\tau_i,\tau_{i+1}]}(t) (x(t)-x(\tau_i)).
 \end{align*}
Hence,
$$\nabla_X F(t,x_t,j_t) = \phi(t,x_t,j_t).$$
So these processes have the form
$$\nabla_X Y^c(t) = \phi((X(t_i), C^{k_j}_{t_i}(j))_{i\in 1..n, j \in 1..m})\mathds{1}_{t>t_n},$$
so such $\nabla_X Y^c$ define a total set in $\mathcal{L}^2_\mathbb{P}([X])$ (see Cont-Fourni\'e \cite{CF13} and Lemma \ref{Radonreq2}). Similarly, 
the processes of the form
$$Y^d = \int_0^t\int_{\mathbb{R}^d_0}\psi(s,y, J_{s-}) \widetilde{J}(ds\ dy),$$
have the form
$$(\nabla^{\mathbb{P}} Y^d)(t,z)= \nabla  G(t,X_t,J_t) = \psi(t,z, J_{t-}),$$
with $G$ the following regular functional representation of $Y^d$:
$$G(t,x_t,j_t) = \int_0^t\int_{\mathbb{R}^d_0}\psi(s,y,j_{s-}) (j(ds\ dy)-\mu(ds\ dy, j_{s-})).$$
Moreover, such $\psi$ are dense in $\mathcal{L}^2_{\mathbb{P}}(\mu)$ (by Section \ref{sec_density} and Lemma \ref{Radonreq2}).
\end{proof}

\begin{remark}
 Notice that $\nabla_X Y^d \equiv 0$. Similarly, $\nabla  Y^c \equiv 0$.
\end{remark}

\begin{corollary}
 The space $I(\mathcal{S})$ is dense in $\mathcal{I}^2_{\mathbb{P}}([X],\mu)$.
\end{corollary}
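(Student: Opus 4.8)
The plan is to deduce the corollary from the preceding density lemma together with the fact that the stochastic integral operator $I_{[X],\widetilde{J}}$ is a bijective isometry. Since the statement concerns the image $I(\mathcal{S}_2) = I_{[X],\widetilde{J}}(\mathcal{S}_2)$, and the image of a dense subset under an isometric isomorphism is again dense, the whole argument reduces to showing that the integrand pairs $(\phi,\psi)$ ranging over $\mathcal{S}_2$ are dense in $\mathcal{L}^2_{\mathbb{P}}([X],\mu)$, which is exactly the content of the previous lemma.

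First I would record that, for $Y \in I(\mathcal{S}_2)$ with $Y(t) = \int_0^t \phi(s)\,dX(s) + \int_0^t\int_{\mathbb{R}^d_0} \psi(s,y)\,\widetilde{J}(ds\ dy)$, the operator $\nabla_{x,j}$ recovers the integrands, namely $\nabla_x Y = \phi$ and $\nabla_p Y = \psi$, by the definition of $\nabla_{x,j}$ together with Proposition \ref{eq:nablaspec} for the jump component and the pathwise Riemann-sum identity $\nabla_x F(t,x_t,j_t)=\phi(t,x_t,j_t)$ established in the continuous case. Thus the set of integrands of $I(\mathcal{S}_2)$ is precisely the set of pairs $(\nabla_x Y,\nabla_p Y)$ treated in the previous lemma, and that lemma asserts their density in $\mathcal{L}^2_{\mathbb{P}}([X],\mu)$.

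Next I would verify that $I_{[X],\widetilde{J}}$ is an isometry from $\mathcal{L}^2_{\mathbb{P}}([X],\mu)$ onto $\mathcal{M}^2_{\mathbb{P}}([X],\mu)$. This rests on the orthogonal decomposition stated above: because $X$ is continuous and $\widetilde{J}$ is purely discontinuous, the cross bracket $[X,\widetilde{J}]$ vanishes, so the two stochastic integrals are orthogonal in $L^2(\Omega,\mathbb{P})$ and the norms add, giving
$$\|I_{[X],\widetilde{J}}(\phi,\psi)\|^2_{\mathcal{M}^2_{\mathbb{P}}([X],\mu)} = E\!\left[\int_0^T \phi^2(s)\,d[X](s)\right] + E\!\left[\int_0^T\!\!\int_{\mathbb{R}^d_0} \psi^2(s,y)\,\mu(ds\ dy)\right] = \|(\phi,\psi)\|^2_{\mathcal{L}^2_{\mathbb{P}}([X],\mu)}.$$
The two-sided It\^o isometry on each component then shows the operator is onto, hence a bijective isometry respecting the direct-sum structure.

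To conclude, given $Y \in \mathcal{M}^2_{\mathbb{P}}([X],\mu)$ I would write $Y = I_{[X],\widetilde{J}}(\phi,\psi)$, approximate $(\phi,\psi)$ in $\mathcal{L}^2_{\mathbb{P}}([X],\mu)$ by a sequence $(\phi_n,\psi_n) \in \mathcal{S}_2$ via the previous lemma, and apply the isometry to obtain $I_{[X],\widetilde{J}}(\phi_n,\psi_n) \to Y$ in $\mathcal{M}^2_{\mathbb{P}}([X],\mu)$ with each $I_{[X],\widetilde{J}}(\phi_n,\psi_n) \in I(\mathcal{S}_2)$. I do not anticipate a genuine obstacle here; the only point requiring a little care is confirming the orthogonality of the continuous and discontinuous integrals so that the isometry holds on the \emph{full} direct sum rather than componentwise, and this is precisely what the stated orthogonal decomposition of $\mathcal{M}^2_{\mathbb{P}}([X],\mu)$ encodes.
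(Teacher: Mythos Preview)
Your proposal is correct and follows exactly the paper's approach: the paper's proof is a one-line appeal to the previous lemma together with the observation that the stochastic integral $I_{X,\widetilde{J}}$ is a bijective isometry between $\mathcal{L}^2_{\mathbb{P}}([X],\mu)$ and $\mathcal{M}^2_{\mathbb{P}}([X],\mu)$. Your additional remarks on why the isometry holds (orthogonality of the continuous and purely discontinuous integrals) are correct and simply flesh out what the paper leaves implicit.
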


\begin{proof}
This follows by the previous lemma, as the stochastic
integral
 \begin{align*}
  I_{X,\widetilde{J}}: \mathcal{L}^2_{\mathbb{P}}([X],\mu) &\to \mathcal{I}^2_{\mathbb{P}}([X],\mu)\\
  (\phi,\psi)&\mapsto \int_0^. \phi(s) dX(s) +
\int_0^.\int_{\mathbb{R}^d_0}\psi(s,y)\widetilde{J}(ds,dy)
 \end{align*}
defines a bijective isometry  between
$\mathcal{L}^2_{\mathbb{P}}([X],\mu)$ and $\mathcal{I}^2_{\mathbb{P}}([X],\mu)$.
\end{proof}

\begin{theorem}(Extension of $(\nabla_{X},\nabla_J)$ to $\mathcal{I}^2_{\mathbb{P}}([X],\mu)$)
 The operator $(\nabla_{X},\nabla_J):I_{X,\widetilde{J}}(\mathcal{S}) \to \mathcal{L}^2_{\mathbb{P}}([X],\mu)$ is closable in
$\mathcal{I}^2_{\mathbb{P}}(X,\mu)$ and its closure $\nabla^{\mathbb{P}}_{X,J}$ defines a bijective isometry between
these two spaces:
\begin{align*}
 \nabla^{\mathbb{P}}_{X,J}:\mathcal{M}^{2}_{\mathbb{P}}([X],\mu) &\to \mathcal{L}^2_{\mathbb{P}}([X],\mu),\\
 F(t,X_t,J_t) := \int_0^t \phi(s) dX(s)+\int_0^t \int_{\mathbb{R}^d_0}\psi(s,y) \widetilde{J}_X(ds
dy) &\mapsto (\nabla_X F,(\nabla  F)) = (\phi, \psi).
\end{align*}

In particular, $\nabla^{\mathbb{P}}_{X,J}$ is the adjoint of the stochastic integral
\begin{align*}
 I_{X,\widetilde{J}}: \mathcal{L}^2_{\mathbb{P}}(X,\mu) &\to \mathcal{I}^2_{\mathbb{P}}(X,\mu),\\
 (\phi,\psi) &\mapsto \int_0^t\psi(s)dX(s) +
\int_0^t\int_{\mathbb{R}^d}\psi(s)\widetilde{J}(ds\ dy)
\end{align*}
in the sense that for all $\psi \in \mathcal{L}^2_{\mathbb{P}}(X, \mu)$ and for all
$Y\in\mathcal{I}^{2}_{\mathbb{P}}(X,\mu)$:
\begin{align*}
 <Y,I_{X,\widetilde{J}}(\phi,\psi)>_{\mathcal{I}^{2}_{\mathbb{P}}([X],\mu)}&:=E\left[
Y(T) (\int_0^T\int_ {
\mathbb { R }^d} \phi(s) dX(s) + \int_0^T\int_ {
\mathbb { R }^d_0} \psi(s,y) \widetilde{J}(ds\ dy)) \right]\\
&=E\left[\int_0^T \nabla_X Y(s) \phi(s) d[X](s) +
\int_0^T\int_{\mathbb{R}^d} \nabla^{\mathbb{P}} Y(s,y) \psi(s,y) \mu(ds\ dy) \right]\\
&=:<\nabla^{\mathbb{P}}_{X,J}Y,(\phi,\psi)>_{\mathcal{L}^2_{\mathbb{P}}([X] \otimes \mu)}.
\end{align*}
\end{theorem}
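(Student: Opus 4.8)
The plan is to mirror the proof of closability established in the purely discontinuous case, exploiting that on the dense subspace $I(\mathcal{S}_2)$ the operator $\nabla_{X,J} = (\nabla_x,\nabla_p)$ is, by its very definition, the inverse of the stochastic integral $I_{X,\widetilde{J}}$: it simply reads off the integrands $(\phi,\psi)$ of an element of $I(\mathcal{S}_2)$. The whole theorem then reduces to showing that $I_{X,\widetilde{J}}$ is an \emph{isometry} from $\mathcal{L}^2_{\mathbb{P}}([X],\mu)$ onto $\mathcal{M}^2_{\mathbb{P}}([X],\mu)$, since an isometry defined on a dense domain extends uniquely to a bijective isometry whose inverse is exactly the closure of $\nabla_{X,J}$.

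First I would verify the isometry, which is the analytic heart of the argument and the step I expect to be the main obstacle. For $(\phi,\psi)\in\mathcal{L}^2_{\mathbb{P}}([X],\mu)$ one must show
$$
E\Bigl[\,\Bigl|\int_0^T \phi(s)\,dX(s) + \int_0^T\!\!\int_{\mathbb{R}^d_0}\psi(s,y)\,\widetilde{J}(ds\,dy)\Bigr|^2\,\Bigr] = E\Bigl[\int_0^T \phi^2(s)\,d[X](s)\Bigr] + E\Bigl[\int_0^T\!\!\int_{\mathbb{R}^d_0}\psi^2(s,y)\,\mu(ds\,dy)\Bigr].
$$
The two diagonal terms are the usual It\^o isometries on $\mathcal{M}^2_{\mathbb{P}}([X])$ and $\mathcal{M}^2_{\mathbb{P}}(\mu)$ respectively. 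The crux is that the cross term vanishes: $N:=\int_0^\cdot\phi\,dX$ is a continuous martingale while $M:=\int_0^\cdot\psi\,\widetilde{J}$ is a purely discontinuous one, so $[N,M]\equiv 0$, whence $NM$ is a martingale started at $0$ and $E[N(T)M(T)]=0$. This is precisely the content of the orthogonal decompositions $\mathcal{M}^2_{\mathbb{P}}([X],\mu)=\mathcal{M}^2_{\mathbb{P}}([X])\oplus\mathcal{M}^2_{\mathbb{P}}(\mu)$ and $\mathcal{L}^2_{\mathbb{P}}([X],\mu)=\mathcal{L}^2_{\mathbb{P}}([X])\oplus\mathcal{L}^2_{\mathbb{P}}(\mu)$ recorded above, and it is what lets the one-dimensional (pure-jump) argument carry over to the product space with a diffusion component.

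Granting the isometry, closability and the bijective-isometry claim follow at once. If $(Y^n)\subset I(\mathcal{S}_2)$ converges to $Y$ in $\mathcal{M}^2_{\mathbb{P}}([X],\mu)$, then $\|\nabla_{X,J}Y^n-\nabla_{X,J}Y^m\|_{\mathcal{L}^2_{\mathbb{P}}([X],\mu)}=\|Y^n-Y^m\|_{\mathcal{M}^2_{\mathbb{P}}([X],\mu)}$, so $(\nabla_{X,J}Y^n)$ is Cauchy in $\mathcal{L}^2_{\mathbb{P}}([X],\mu)$ and converges; applying the same identity to the difference of two approximating sequences shows the limit is independent of the chosen sequence. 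This defines the closure on all of $\mathcal{M}^2_{\mathbb{P}}([X],\mu)$, identifies it with $I_{X,\widetilde{J}}^{-1}$, and makes it a bijective isometry by density of $I(\mathcal{S}_2)$ (the Corollary above).

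Finally I would deduce the integration-by-parts (adjoint) relation. For $Y\in I(\mathcal{S}_2)$ with $\nabla_{X,J}Y=(\nabla_x Y,\nabla_\mu Y)$ and any $(\phi,\psi)\in\mathcal{L}^2_{\mathbb{P}}([X],\mu)$, expanding $E[Y(T)\,I_{X,\widetilde{J}}(\phi,\psi)]$ and again discarding the continuous-against-jump cross terms by the same orthogonality yields
$$
\langle Y, I_{X,\widetilde{J}}(\phi,\psi)\rangle_{\mathcal{M}^2_{\mathbb{P}}([X],\mu)} = E\Bigl[\int_0^T \nabla_x Y(s)\,\phi(s)\,d[X](s)\Bigr] + E\Bigl[\int_0^T\!\!\int_{\mathbb{R}^d_0}\nabla_\mu Y(s,y)\,\psi(s,y)\,\mu(ds\,dy)\Bigr].
$$
Passing to the limit in $Y$, using the continuity just established, extends the identity to all $Y\in\mathcal{M}^2_{\mathbb{P}}([X],\mu)$. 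Essential uniqueness of $(\nabla_x Y,\nabla_\mu Y)$ is obtained exactly as in the discontinuous case: any competitor $(\phi',\psi')$ would satisfy $\langle Y-I_{X,\widetilde{J}}(\phi',\psi'),Z\rangle_{\mathcal{M}^2_{\mathbb{P}}([X],\mu)}=0$ for all $Z$ in the dense set $I(\mathcal{S}_2)$, forcing $Y=I_{X,\widetilde{J}}(\phi',\psi')$ and hence $(\phi',\psi')=\nabla_{X,J}Y$ in $\mathcal{L}^2_{\mathbb{P}}([X],\mu)$.
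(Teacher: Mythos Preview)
Your proposal is correct and follows essentially the same route as the paper: both arguments rest on the It\^o isometry (with the cross term vanishing by orthogonality of the continuous and purely discontinuous parts), the density of $I(\mathcal{S}_2)$, and the resulting identification of $\nabla_{X,J}$ with $I_{X,\widetilde{J}}^{-1}$. Your packaging---``$I_{X,\widetilde{J}}$ is an isometry on a dense domain, so its inverse extends uniquely to a bijective isometry''---is a slightly cleaner conceptual wrapper, but the underlying computations (It\^o isometry, vanishing cross term, uniqueness via density) are the same as the paper's.
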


\begin{proof}

By definition of $\mathcal{I}^{2}_{\mathbb{P}}(X,\mu)$, we know that there exists $(\phi,\psi) \in \mathcal{L}^2_{\mathbb{P}}([X],\mu)$ such that
$$Y(t) =\int_0^t \phi(s)dX(s) +\int_0^t\int_{\mathbb{R}^d_0} \psi(s,y) \widetilde{J}_{X}(ds\ dy).$$

Then, by the It\^o isometry on
$\mathcal{I}^2([X],\mu)$, with $Z \in I(\mathcal{S})$,
\begin{align}
\label{IBP3}
 E\left[ Y(T) Z(T)\right] &= E
\left[\int_0^T \phi(s)\nabla_X Z(s) d[X]_s
+\int_0^T\int_{\mathbb{R}^d_0}\psi(s,y)
\nabla  Z(s, y) \mu(ds\ dy) \right],\\
\nonumber
&= <(\phi, \psi),(\nabla_X Z, \nabla  Z)>_{\mathcal{I}^2_{\mathbb{P}}([X], \mu)}.
\end{align}

Moreover, (\ref{IBP3}) uniquely characterises $\phi$ $d\mathbb{P}\times d[X]$-a.e., and $\psi$ $d\mathbb{P}\times d\mu$-a.e. For if $(\eta, \rho)$ is any other solution of
(\ref{IBP3}). Then
\begin{align*}
 <Y-I_{X, \widetilde{J}}(\eta, \rho),Z>_{\mathcal{I}^2_{\mathbb{P}}(X,\mu)} =
E\left[(Y-I_{X,\widetilde{J}}(\eta))Z(t) \right] = 0.
\end{align*}
for all $Z \in I(\mathcal{S})$. Hence, $Y-I_{X,\widetilde{J}}(\eta,\rho)=0$
$\mathbb{P}$-a.s. on
$\mathcal{I}^{2}(X, \mu)$ by density of $I(\mathcal{S})$ in
$\mathcal{I}^{2}(X, \mu)$. So $\psi=\eta$ $d[X]
\times d\mathbb{P}$-a.e. and $\psi = \rho$ $d\mu \times d\mathbb{P}$-a.e. and so $(\phi,\psi)$ is essentially unique.

Now, for any $Y \in \mathcal{I}^{2}_{\mathbb{P}}(X,\mu)$ and let $(Y^n)_{n\in \mathbb{N}}$ a sequence of $I(\mathcal{S})$ that converges to $Y$ in $\mathcal{I}^{2}_{\mathbb{P}}(X,\mu)$.
  \begin{align*}
   0 &= \lim_{n\to \infty} E[|Y^n(T)-Y(T)|^2],\\
    &= \lim_{n \to \infty} E[\int_0^T\nabla_X Y^n (t) - \phi(t) dX(t) + \int_0^T\int_{\mathbb{R}^d_0}
    \nabla  Y^n (t,z) - \psi(t,z) \widetilde{J}(dt dz)|^2],\\
    &= \lim_{n \to \infty} \mathbb{E}[\int_0^T\int_{\mathbb{R}^d_0} |\nabla  Y^n(t,z)- \psi(t,z)|^2 \mu(ds dz)]\\
  &+ \lim_{n \to \infty} \mathbb{E}[\int_0^T |\nabla_X Y^n(t)- \phi(t)|^2 d[X](s)]\\
 &+\lim_{n \to \infty} 2 E[(\int_0^T\nabla_X Y^n (t) - \phi(t) dX(t))\cdot (\int_0^t\int_{\mathbb{R}^d_0} \nabla  Y^n (t,z) - \psi(t,z) (\widetilde{J}(ds dz))],
  \end{align*}
and the last term is zero by the It\^o isometry.~Consequently, for any approximating sequence $Y^n$ converging to $Y$, $(\nabla_X Y^n, \nabla  Y^n)$ converges to $(\phi,\psi)$. So $\nabla_{X,J}$ is closable and we can write $(\phi,\psi) = (\nabla_X^{\mathbb{P}} Y, \nabla^{\mathbb{P}} Y)$
\end{proof}
\subsection{Martingale representation formula}

It is known (Jacod-Shiryaev \cite{JS13}, Lemma 4.24 p. 185) that every local martingale (and so any square-integrable martingale) can be written as

\begin{equation}
\label{decomp}
 Y(t) = Y(0)+ \int_0^t \phi dX(s) + \int_{[0,t]\times \mathbb{R}^d_0} \psi(s,z) \widetilde{J}(ds dz)+ N(t),
\end{equation}
with $N$ a local martingale orthogonal to $X$ and $\mu$.

\begin{theorem}
\label{MRF_disc}
Assume that the square-integrable martingale $Y$ is such that $N$ is zero in its above decomposition; Then
\begin{equation*}
 Y(t) = Y(0)+ \int_0^t \nabla^{\mathbb{P}}_X Y(s) dX(s) + \int_0^t\int_{\mathbb{R}^d} (\nabla^{\mathbb{P}} Y)(s,z) \widetilde{J}(ds dz),
\end{equation*}
where $\nabla^{\mathbb{P}}$ is the closure in $\mathcal{L}^2_{\mathbb{P}}(\mu)$ of the pathwise operator $\nabla$ introduced in Proposition \ref{eq_nablaspec} for functionals with a regular functional representation.
\end{theorem}
\begin{proof}
We have that the martingale $Y(t)-Y(0)$ is the sum of two stochastic integrals. So $\mathbb{P}$-a.s.,
$$
Y(t)-Y(0) =\int_0^t\nabla_X^{\mathbb{P}} (Y(s)-Y(0)) dX(s) + \int_0^t\int_{\mathbb{R}^d_0} \nabla^{\mathbb{P}}(Y(s,z)-Y(0)) \widetilde{J}(ds dz)
$$
Moreover, $Y(0)$ is a constant everywhere and so $\nabla_X^{\mathbb{P}} Y(0)=0$ and $\nabla^{\mathbb{P}}Y(0) = 0$.
The linearity of $\nabla^{\mathbb{P}}$ allows us to write
$$
Y(t)=Y(0)+\int_0^t\nabla_X^{\mathbb{P}} Y(s) dX(s)+ \int_0^t\int_{\mathbb{R}^d_0} \nabla^{\mathbb{P}}Y(s,z) \widetilde{J}(ds dz)
$$
\end{proof}

If the filtration $\mathbb{F}$ has the predictable representation property ,i.e. every martingale has $N \equiv 0$ in its decomposition, then we trivially obtain the following corollary.

\begin{corollary}[Martingale representation formula]
\label{MRF_disc_filtr}
Assume that $\mathbb{F}$ has the predictable representation property. Then, for any square-integrable $\mathbb{F}$-martingale $Y$,
\begin{equation*}
 Y(t) = Y(0)+ \int_0^t \nabla^{\mathbb{P}}_X Y(s) dX(s) \int_0^t\int_{\mathbb{R}^d} (\nabla^{\mathbb{P}}_J Y)(s,z) \widetilde{J}(ds dz),
\end{equation*}
where $\nabla^{\mathbb{P}}$ is the closure in $\mathcal{L}^2_{\mathbb{P}}(\mu)$ of the pathwise operator $\nabla$ introduced in Definition \ref{eq_nablaspec} for functionals with a regular functional representation.
\end{corollary}

\begin{remark}
 When taking the closure in $\mathcal{L}^2_{\mathbb{P}}(\mu)$ of
$\nabla $, one loses the pathwise interpretation.
\end{remark}

Note that this approach treats the continuous and jump
parts in a similar fashion. If the filtration is generated by a continuous martingale $X$
and jump measure $J$ (with compensator $\mu$), then one can construct the following martingale-generating measure $dS$ on $[0,T]\times \mathbb{R}^d_0$:
 \begin{equation*}
  dS(t,z) = \mathds{1}_{z=0}.dX(s) + z \widetilde{J}(dsdz),
\end{equation*}
and the martingale representation formula can then be rewritten for any square-integrable martingale as
\begin{equation*}
 Y(t) = Y(0)+\int_0^t\int_{\mathbb{R}^d} D Y(s,z) dS(s,z),
\end{equation*}
where
$D = \nabla_{X}\mathds{1}_{z=0} + \frac{1}{z}\nabla_{(t,z)}  \mathds{z\in \mathbb{R}^d_0}$.

So $D$ is the limiting quotient operator when $z \to
0 $ of the operator $\frac{1}{z} \nabla_{(t,z)} $, and the continuous and jump integrands are treated in a similar way.

\subsection{Relation with  Malliavin calculus on Poisson space}
\label{sec_COM}
We now compare our functional approach with previous approaches based on  Malliavin calculus on Poisson space \cite{BI83,DOP09,LP11,LO05,NV90,PE08}.

As mentioned in the introduction, another approach to Malliavin calculus with jumps is chaos expansions on the Poisson space (see e.g. {\O}ksendal et al. \cite{DOP09}). Here one decomposes a random variable satisfying some integrability conditions as a series of iterated integrals:
\begin{equation*}
 F= \sum\limits_{n=0}^{\infty} I_n(f_n),
\end{equation*}
where the $I_n$ are iterated integrals of a (symmetric) function $f_n$ with respect to a Poisson process.

The Malliavin derivative is then defined as the operator that bring this decomposition down by one level:
\begin{align*}
 D: \mathbb{D}^{1,2} &\to L^2(\lambda \times \nu \times P),\\
F &\mapsto D_{t,z}: = \sum\limits_{n=1}^\infty n I_{n-1}(f_n(\cdot, t,z)),
\end{align*}
with $\mathbb{D}^{1,2}$ the classical Malliavin-Sobolev appearing in the Malliavin literature.

It was in fact already noted that in the finite-activity case, a pathwise interpretation of the chaos expansion operator is possible, as mentioned in Last-Penrose \cite{LP11}.~L{\o}kka \cite{LO05} extends the results of Nualart-Vives \cite{NV90} from the Poisson to the L\'evy case (see also Petrou \cite{PE08} for financial applications), showing that this chaos expansion approach expands to the pure-jump L\'evy setting, and is equivalent to Picard operator, which consists in putting an extra weight locally on the jump measure; define the annihilation and creation operators $\epsilon^-$ and $\epsilon^+$ on measures by
\begin{align*}
 &\epsilon^-_{t,z} m(A) = m(A \cap \{(t,z)\}^c),\\
& \epsilon^+_{t,z} m(A) = \epsilon^-_{t,z} m(A) + \mathds{1}_A(t,z).
\end{align*}
Then, for functionals defined on $\Omega^g$, the space of general measures from $[0,T] \times \mathbb{R}^d$ to $\mathbb{R}$,
the ``Malliavin-type'' operator is defined as
\begin{align*}
 \tilde{D}_{t,z} :\Omega^g \times [0,T] \times \mathbb{R}^d &\to \mathbb{R}, \\
 (F,t,z) &\mapsto F \circ \epsilon^+_{t,z} -F.
\end{align*}
$\tilde{D}$ is closable to $L^2([0,T]\times\Omega)$, and $\tilde{D} = D$ on $\mathbb{D}^{1,2}$.
This ``addition of mass'' approach through a creation and annihilation operator appears in other approaches such as  the ``lent-particle method''   \cite{BD11}. 

Alternatively, the chaos expansion operator can be associated to an equivalent perturbation operator that takes the form of a quotient operator rather than a finite-difference one.
This is typically the case in the work of L\'eon et al \cite{LSUV02} or Sol\'e, Utzet and Vives \cite{SUV07}, who work in the framework of   L\'evy processes: for a 
$$\omega = ((t_1,z_1),\dots,(t_k,z_k),\dots),$$ the operator $\hat{D}$ is defined on the space of square-integrable F such that $E[\int (DF)^2 \nu(dz)]<\infty$ as 
$$\hat{D}_{t,z}F(\omega) = \frac{F(\omega_{(t,z)}) -F(\omega)}{z},$$
with $\omega_{(t,z)} = ((t_1,z_1),\dots,(t_k,z_k),(t,z),\dots)$.
It is shown that if $\hat{D} F$ is square-integrable, then $DF$ is also well defined and coincides with $\hat{D}$.

These approaches consisting in adding mass to the measure look in some aspects similar to the one that we have taken here. There is, however, a fundamental difference: in our \textit{pathwise} approach, we directly perturb the predictable projection of the process, rather than taking the predictable projection of the perturbed process. Moreover, there is no need to restrict the setting to a Poisson or L\'evy space.

                                       %
%
The relationship between $\nabla^{\mathbb{P}}$ and the different operators $D$, $\tilde{D}$ or $\hat{D}$ can be summarised as follows, which is a jump counterpart to the Cont-Fourni\'e lifting theorem \cite{CF13}.

Since all the operators defined above give rise to the following type of martingale representation:
\begin{align*}
 Y(t) &= Y(0)+ \int_0^t\int_{\mathbb{R}^d_0} {^{p}}E[D_{s,y} Y_s| \mathcal{F}_s] (J-\mu)(ds\ dy),\\
&=Y(0)+ \int_0^t\int_{\mathbb{R}^d_0} {^{p}}E[\tilde{D}_{s,y} Y_s| \mathcal{F}_s] (J-\mu)(ds\ dy),\\
&=Y(0)+ \int_0^t\int_{\mathbb{R}^d_0} {^{p}}E[\hat{D}_{s,y} Y_s| \mathcal{F}_s] (J-\mu)(ds\ dy),
\end{align*}
the above rewrites as 
\begin{proposition} 
$$
\nabla^{\mathbb{P}} Y(s) = {^{p}}E[D_{s,y} Y(t)| \mathcal{F}_s]= {^{p}}E[\tilde{D}_{s,y} Y(t)| \mathcal{F}_s]={^{p}}E[\hat{D}_{s,y} Y(t)| \mathcal{F}_s] \quad d\mathbb{P}\times d\mu-a.e.
$$
\end{proposition}
In other words, $\nabla_{\mathbb{P}}$ is the predictable projection of any of the aforementioned Malliavin operators that yield a Clark-Ocone formula.

This translates into the following commutative diagram: 
\vspace{0.5cm}

\begin{center}
\begin{tabular}{ccc}
 $\mathcal{I}^2_{\mathbb{P}}$ & $\overset{\nabla}{\longrightarrow}$ & $\mathcal{L}^2_{\mathbb{P}}(\mu)$\\
 $\uparrow ({^{p}}E[\cdot | \mathcal{F}_s])_{s \in [0,T]}$ & &$\uparrow ({^{p}}E[\cdot | \mathcal{F}_s])_{s \in [0,T]}$\\
 $\mathbb{D}^{1,2}$ & $\overset{(D_{t,z})_{t\in [0,T]}^{z\in \mathbb{R}^d_0}}{\longrightarrow}$ & $L^2([0,T]\times \mathbb{R}^d\times \Omega).$
\end{tabular}
\end{center}
We of course have a similar diagram for $\tilde{D}$ and $\hat{D}$, with $\mathbb{D}^{1,2}$ replaced by their corresponding domains on the bottom-left.

\section{Examples}
\label{sec_examples}
\subsection{Kunita-Watanabe decomposition}
\label{subsec_KW}

We consider a probability space $(\Omega, \mathcal{F},\mathbb{P})$, on which a Brownian motion $W$ and a jump measure $J$ --with compensator $\mu$,such that $\mu(dt dz) = \nu(dz) dt$-- generate the filtration $\mathbb{F}$.
We write
$$X(t) := \sigma W(t) + \int_0^t\int_{\mathbb{R}_0} z \widetilde{J}(ds\ dz),$$
with $\widetilde{J}$ the compensated jump measure.

The  Kunita-Watanabe decomposition (see e.g. \cite{CT04}) states that for
such a martingale $X$ and
$Y \in \mathcal{L}^2_{\mathbb{P}}([X],\mu)$, there exists a unique $\widetilde{Y}$ with
\begin{enumerate}
 \item $\widetilde{Y}(\cdot) = E[Y] + \int_0^. \psi(s) dX(s)$,
 \item $E[(Y-\widetilde{Y})M] = 0$ for all $M = \int_0^. \xi(s) dX(s)$.
\end{enumerate}

One can then compute the Kunita-Watanabe decomposition, such as in \cite{Bal03}, extending it from the Malliavin space $\mathbb{D}^{1,2}$ to the whole $\mathcal{I}^2_{\mathbb{P}}([X],\mu)$.
We have
\begin{equation*}
 Y(t) = E[Y]  +\int_0^t \nabla_W^{\mathbb{P}} Y \sigma dW(s) + \int_0^T\int_{\mathbb{R}_0} \nabla_J^{\mathbb{P}} Y(s,z) \widetilde{J}(ds\ dz).
\end{equation*}
and
\begin{equation*}
 \widetilde{Y}(t) = E[Y]  +\int_0^t \psi(s) \sigma dW(s) + \int_0^T\int_{\mathbb{R}_0} \psi(s) z \widetilde{J}(ds\ dz).
\end{equation*}
Hence
\begin{equation*}
 Y^o := Y-\widetilde{Y} = \int_0^t (\nabla_W^{\mathbb{P}} Y(s) -\psi(s)) \sigma dW(s) +
\int_0^t \int_{\mathbb{R}_0} (\nabla_J^{\mathbb{P}} Y(s,z) - z \psi(s))\widetilde{J}(ds\ dz),
\end{equation*}

The orthogonality condition entails that for all square-integrable $M(\cdot):= \int_0^. \xi(s) dW(s)$:
\begin{align*}
 E[Y^o M] &=E[\int_0^t(\nabla_W^{\mathbb{P}} Y(s) - \psi(s))\xi(s) \sigma^2 ds\\
&+\int_0^T\int_{\mathbb{R}_0} (\nabla_J^{\mathbb{P}} Y(s,z) - z \psi(s)).z \xi(s)\mu(ds\ dz)]\\
& = E\left[\int_0^t \xi(s) \left[ (\nabla_W^{\mathbb{P}} Y(s)-\psi(s)) \sigma^2 +
\int_{\mathbb{R}_0} z (\nabla_J^{\mathbb{P}} Y - z \psi(s)) \right] ds\right],
\end{align*}
using that $\int_0^t \xi(s) dX(s)$ is a martingale, the It\^o
isometries and the orthogonality relations between continuous and pure jump
parts. This implies that
\begin{equation*}
(\nabla_W^\mathbb{P} Y(s)-\psi(s)) \sigma^2 +\int_{\mathbb{R}_0} z (\nabla_J^{\mathbb{P}} Y(s,z) - z \psi(s)) \nu(dz) = 0.
\end{equation*}
and so that 
\begin{equation*}
 \psi(s) = \left(\sigma^2 \nabla_W^\mathbb{P} Y(s) + \int_{\mathbb{R}_0} z \nabla_J^{\mathbb{P}} Y(s,z) \nu(dz)\right)\cdot\left(\sigma^2 +
\int_{\mathbb{R}_0} |z|^2 \nu(dz)\right)^{-1}.
\end{equation*}

\subsection{Stochastic exponential for pure-jump L\'evy processes}

In this example, we show how one can recover the SDE satisfied by a
stochastic exponential that is a martingale.
Consider a probability space $(\Omega, \mathcal{F}, \mathbb{P})$, where a jump measure $J$ such that
$\int_0^T\int_{\mathbb{R}^d_0} |z|^2 J(ds\ dz)< \infty$ a.s.,
and with absolutely-continuous compensator $\mu$ generates the filtration $(\mathcal{F}_t)_{t \in
[0,T]}$. As in the previous sections, we write $\widetilde{J}$ for the compensated jump measure $J-\mu$.
Then, we can write the stochastic exponential:
$$\mathcal{E}_t = e^{\int_0^t \int_{\mathbb{R}^d_0} z (J-\mu)(ds dz)}\prod\limits_{s\in [0,t]}(1+\int_{\mathbb{R}^d_0}z J(\{s\}\times dz) e^{-\int_{\mathbb{R}^d_0}z J(\{s\}\times dz)}.$$

Let us introduce the \textit{truncated Dol\'eans-Dade process}:
$$\mathcal{E}^n_t = e^{\int_0^t \int_{(\frac{1}{n},\infty)^d}z(J-\mu)(ds dz)}\prod\limits_{s\in [0,t]}(1+\int_{(\frac{1}{n},\infty)^d}z J(\{s\}\times dz))
e^{-\int_{(\frac{1}{n},\infty)^d}z J(\{s\}\times dz)}.$$

 Notice that the functional
 $$F^n(t,j_t) = e^{\int_0^t \int_{(\frac{1}{n},\infty)^d} z (j(ds\ dz)-\mu(ds dz, j_{s-}))}\prod\limits_{s\in [0,t]}(1+\int_{(\frac{1}{n},\infty)^d}z j(\{s\}\times dz))
e^{-\int_{(\frac{1}{n},\infty)^d}z j(\{s\}\times dz)},$$
 is well defined, since the compensated integral is simply a
Lebesgue-Stieltjes integral. It is straightforward to compute that 
$(\nabla_J F^n)(t,z,j_{t}) = z F^n(t,j_{t-})$.
Now, since $\mathcal{E}^n_t$ tends to $\mathcal{E}_t$ in $\mathcal{I}^2_{\mathbb{P}}(\mu)$, we also have that
$$\nabla_J  \mathcal{E}^n_t \underset{n \to \infty}{\longrightarrow} \nabla_J^{\mathbb{P}} E_t $$
in $\mathcal{L}^2_{\mathbb{P}}(\mu)$.
So $$\nabla_J \mathcal{E}^n_t = z F^n(t,J_{t-})\underset{n \to \infty}{\longrightarrow}z  \mathcal{E}_{t-}$$
in the $\mathcal{L}^2_{\mathbb{P}}(\mu)$ sense.
So by uniqueness of the integrand in the martingale representation formula,
$$\mathcal{E}_t = 1 + \int_0^T \mathcal{E}_{t-}dX(t)$$
with $X(t) = \int_0^t\int_{\mathbb{R}^d_0} z \widetilde{J}(ds,dz)$ a purely
discontinuous L\'evy martingale.
We recover the classical SDE satisfied by $E$, the stochastic exponentional of the martingale $X$.

\subsection{Application to the Kella-Whitt martingale}

Consider, on a probability space $(\Omega, \mathcal{F}, \mathbb{P})$ a L\'evy process $X$ with positive jumps
$$X(t) := \gamma t + \int_0^t\int_{(-\infty, 0)} z \widetilde{J}(ds\ dz),$$
where $J$ is a Poisson random measures with compensator 
$$\mu(dt dy) = \nu(dy) dt$$
and $\tilde{J}=J-\mu$.
We  assume the L\'evy measure satisfies $\int_{(-\infty, 0)} e^{2\alpha x} \nu(dx)<\infty$ and denote by 
$$\overline{X(t)} =\sup_{s\in [0,t]} X(t).$$

The \textit{Kella-Whitt martingale} $M$ associated with $X$, introduced in \cite{KW92},  appears in queuing theory and modelling of storage processes (see  Kella-Boxma \cite{KB13} and Kyprianou \cite{KY14}), is defined as:
$$M(t) = \psi(\alpha)\int_0^t e^{-\alpha Z(s)}ds+1-e^{-\alpha Z(t)} -\alpha \overline{X(t)},$$
where  $Z(t) := \overline{X(t)}-X(t)$, $\alpha>0$ and
$$\psi(\alpha) = \gamma t + \int_{(-\infty,0)}(e^{-\alpha x}-1 -\alpha x \mathds{1}_{\{|x|<1\}})\nu(dx).$$
Then $M$ is a martingale (see e.g. \cite[Ch. 3, Sec. 5]{KY14}).
Moreover,
\begin{equation*}
 [M,M]_t = \sum\limits_{\substack{0\leq s \leq t\\|\Delta X(s)| \neq 0}} e^{-\alpha \overline{X(t)}^2}. e^{|\Delta X(s)|^2} .
\end{equation*}
By hypotheses on $\nu$, this quantity is finite, and using Protter (\cite{PR04}, Cor. 3 p. 73) $M$ is a square-integrable martingale. 

An application of Theorem \ref{MRF_disc_filtr} yields an  integral representation formula for this martingale:
\begin{theorem}[Poisson integral representation of Kella-Whitt martingale]
 The Kella-Whitt martingale $M$ has the  representation: 
 \begin{equation*}
 M(t) = E[M(t)]+\int_0^t\int_{(-\infty,0)} e^{-\alpha (\overline{X(s)}-X(s))} (1-e^{\alpha y}) \widetilde{J}(ds dy).
\end{equation*}
\end{theorem}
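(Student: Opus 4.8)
The plan is to observe that the filtration is generated solely by the jump measure $J$ --- the drift $\gamma t$ being deterministic, hence a predictable finite-variation term that contributes no martingale part --- so the purely discontinuous representation of Theorem \ref{MRF_disc} applies. Since $M$ has already been shown to be a square-integrable martingale, I would write
$$M(t) = E[M(t)] + \int_0^t\int_{(-\infty,0)} \nabla_{\mu}M(s,y)\,\widetilde{J}(ds\,dy),$$
reducing the entire problem to computing the pathwise vertical perturbation $\nabla_{\mu}M = \nabla_{j,y}M$ of Definition \ref{def:nabla_z}, i.e. evaluating $M(t,j_{t-}+\delta_{(t,y)}) - M(t,j_{t-})$ term by term on the functional representation $M(t)=F(t,J_t)$.

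The key structural fact, which I would isolate first, is the effect on the path of $X$ of adding a single downward jump $\delta_{(t,y)}$ with $y<0$. Writing $X(t-)$ and $\overline{X}(t-)$ for the values obtained from $j_{t-}$, the perturbed measure gives the level $X(t-)+y$ at time $t$, while the drift contribution is unchanged because a single atom carries no Lebesgue mass. Spectral negativity is decisive here: a downward jump sends $X$ strictly below $X(t-)\le\overline{X}(t-)$, so it cannot create a new running maximum, whence $\nabla_{j,y}\overline{X}(t)=0$. Consequently $Z(t)=\overline{X}(t)-X(t)$ transforms as $Z(t-)\mapsto Z(t-)-y$.

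With these facts in hand I would differentiate $M$ term by term. The Lebesgue integral $\psi(\alpha)\int_0^t e^{-\alpha Z(s)}\,ds$ and the constant $1$ have vanishing perturbation (the former because perturbing at the single time $t$ leaves the integral unchanged, the latter trivially), and $-\alpha\,\overline{X(t)}$ is annihilated by the invariance of the running maximum. Only $-e^{-\alpha Z(t)}$ survives, giving
$$\nabla_{j,y}\big(-e^{-\alpha Z(t)}\big) = -e^{-\alpha(Z(t-)-y)} + e^{-\alpha Z(t-)} = e^{-\alpha Z(t-)}\big(1-e^{\alpha y}\big).$$
Identifying $Z(t-)=\overline{X(s)}-X(s)$ --- the running maximum being continuous, its left limit equals its value --- yields $\nabla_{\mu}M(s,y) = e^{-\alpha(\overline{X(s)}-X(s))}(1-e^{\alpha y})$, and substituting into the representation formula gives the claimed expression.

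I expect the main obstacle to be the careful justification of the running-maximum invariance at the level of the non-anticipative functional: one must exhibit a functional representation of $\overline{X}$ to which $\nabla_{j,y}$ applies and verify rigorously that spectral negativity forces $\nabla_{j,y}\overline{X}=0$ (rather than merely arguing heuristically on sample paths). One should also check the consistency of the constant term, which is immediate since $M(0)=1-e^{0}-0=0=E[M(t)]$ by the martingale property; the remaining steps are the elementary exponential manipulations above.
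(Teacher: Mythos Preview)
Your computation of the pathwise perturbation is correct term by term, and your use of spectral negativity to kill $\nabla_{j,y}\overline{X}$ is exactly the right observation. The gap is the identification $\nabla_\mu M = \nabla_{j,y} M$, which you treat as automatic. In the paper's framework $\nabla_\mu$ is the \emph{closure} of the pathwise operator $\nabla_p$, and the pathwise formula is only established for processes in $I(\mathcal{S})$ (or $I(\mathcal{O}_{fc})$), i.e.\ compensated integrals of fields with compact support in $\mathbb{R}^d_0$; see Proposition~\ref{eq:nablaspec} and the explicit warning after Theorem~\ref{MRF_gen} that ``when taking the closure \ldots\ one loses the pathwise interpretation.'' The Kella--Whitt martingale is built from $X(t)=\gamma t+\int_0^t\!\int_{(-\infty,0)} z\,\widetilde J(ds\,dz)$, and if the L\'evy measure has infinite variation near $0$ this integral is genuinely stochastic: there is no non-anticipative functional $F$ on $\mathcal{M}_T$ with $X(t)=F(t,j_t)$ for every $j$, hence no functional representation of $M$ to which Definition~\ref{def:nabla_z} applies.

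The paper closes this gap by truncating the small jumps: it introduces $X^n$, $\psi^n$ and $M^n$ with jumps in $(-\infty,-\tfrac1n)$, so that $M^n$ has finite variation and a bona fide pathwise functional representation; your perturbation argument is then legitimately applied to $M^n$, yielding $\nabla_p M^n(t,z)=e^{-\alpha(\overline{X^n(t-)}-X^n(t-))}(1-e^{\alpha z})$. The substantive work---occupying most of the proof---is to show $M^n\to M$ in $\mathcal{M}^2_{\mathbb P}(\mu)$, which via the closure property forces $\nabla_p M^n\to\nabla_\mu M$ in $\mathcal{L}^2_{\mathbb P}(\mu)$ and identifies the limit. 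Your proposal skips this approximation-and-convergence step entirely, and the obstacle you anticipate (justifying $\nabla_{j,y}\overline{X}=0$ at the functional level) is in fact a symptom of the deeper issue: without truncation there is no functional to perturb.
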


\begin{proof}
Denote 
$$\psi^n(\alpha) = \gamma t + \int_{(-\infty,-\frac{1}{n})}(e^{-\alpha x}-1 -\alpha x \mathds{1}_{\{|x|<1\}}\nu(dx)),$$
and
$$X^n(t) = \gamma t + \int_0^t\int_{(-\infty, -\frac{1}{n})} z \widetilde{J}(dsdz),$$
and write
$$M^n(t) := \psi^n(\alpha)\int_0^t e^{-\alpha (\overline{X^n(s)}-X^n(s))}ds+1-e^{-\alpha (\overline{X^n(t)}-X^n(t))} -\alpha \overline{X^n(t)}.$$
So $M^n$ is $M$ without the small jumps.

Then $M^n$ is a square-integrable martingale, and $M^n$ converges to $M$ in $\mathcal{I}^2_{\mathbb{P}}(\mu)$, i.e.
$$\mathbb{E}\left[ |M^n(t)-M(t)|^2\right] \underset{n\to\infty}{\longrightarrow} 0.$$
Noticing that $M^n$ has finite variation and is therefore well defined as a pathwise integral, and that since $X$ being spectrally negative, it never reaches its maximum when it jumps, we compute
$$(\nabla  M^n)(t,z) =  e^{-\alpha (\overline{X^n(t-)} -X^n(t-))}(1-e^{\alpha z}),$$
which -- by using the martingale representation formula -- yields:
$$M^n(t) = \int_0^t \int_{(-\infty,0)} e^{-\alpha (\overline{X^n(s-)} -X^n(s-))}(1-e^{\alpha z}) \widetilde{J}(ds dz).$$
To continue: when using functional It\^o calculus, pathwise computations -- when available -- are fairly straigthforward. But the price one has to pay for that is to be able to justify the convergence in $\mathcal{I}^2_{\mathbb{P}}(\mu)$ of an approximating martingale sequence to the desired one. This is the focus of the rest of this example. We have:
\begin{align}
\label{kwl1}
 \mathbb{E}[|M(T)-M^n(T)|^2] \leq &C E[(\psi(\alpha)-\psi^n(\alpha))^2(\int_0^T e^{-\alpha(\overline{X(s)}-X(s))}ds)^2]\\
 \label{kwl2}
 &+C E[(\psi^n(\alpha))^2 (\int_0^T e^{-\alpha\overline{X(s)}} (e^{\alpha X(s)} - e^{\alpha X^n(s)})ds)^2] \\
 \label{kwl2bis}
 &+C E[(\psi^n(\alpha))^2 (\int_0^T e^{\alpha X(s)} (e^{-\alpha \overline{X(s)}} - e^{-\alpha \overline{X^n(s)}})ds)^2] \\
 \label{kwl3}
 &+C E[e^{-2\alpha \overline{X(t)}}(e^{\alpha X(t)}-e^{\alpha X^n(t)})^2] 
 \\
 \label{kwl4}
 &+C E[e^{2 \alpha X^n(t)}(e^{-\alpha \overline{X(t)}}-e^{-\alpha \overline{X^n(t)}})^2] \\
 \label{kwl5}
 &+C \alpha^2 E[(\overline{X(t)} - \overline{X^n(t)})^2].
\end{align}
The constant $C$ comes from expanding the square and using the inequality $2ab\leq a^2+b^2$ on the cross-terms.
We shall now show that all terms on the left-hand side tend to zero. Results in Dia \cite{DI13} on small-jump truncations approximations prove useful here, and we use several of them.
Term (\ref{kwl5}) tends to zero: the proof relies on noticing the residual $X(t)-X^n(t)$ is a martingale and using Doob's martingale inequality for the sup (see Dia \cite{DI13}, proof of proposition 2.10).
In term (\ref{kwl1}), notice that the integrand is always less than $1$. Hence
\begin{align*}
 E[(\psi(\alpha)-\psi^n(\alpha))^2(\int_0^T e^{-\alpha(\overline{X(s)}-X(s))}ds)^2]&\leq T^2 E[(\psi(\alpha)-\psi^n(\alpha))^2]\\
 &= E[(\int_{(-\frac{1}{n},0)}e^{\alpha x}-1-\alpha x \mathds{1}_{|x|< 1} \nu(dx))^2].
\end{align*}
and the integral is deterministic, so the expectation vanishes, and this term tends to zero. 
Taking term (\ref{kwl3}),
\begin{align*}
E[e^{-2\alpha \overline{X(t)}}(e^{\alpha X(t)}-e^{\alpha X^n(t)})^2]&\leq e^{-\alpha X(0)} E[(e^{\alpha X(t)}-e^{\alpha X^n(t)})^2]\\
&= e^{-\alpha X(0)}E[e^{2\alpha X(t)} + e^{2\alpha X^n(t)} - 2 e^{\alpha X^n(t)} e^{\alpha X(t)}].
\end{align*}
Moreover, by Proposition 2.2 in Dia \cite{DI13}, $e^{2\alpha X^n(t)}$ converges to $e^{2\alpha X(t)}$ in the following norm:
$$ \|e^{2\alpha X(t)}-e^{2\alpha X^n(t)}\|_{L^1}:= E[|e^{2\alpha X(t)}-e^{2\alpha X^n(t)}|]\underset{n\to \infty}{\longrightarrow} 0.$$
Also
\begin{align*}
 -E[e^{\alpha X^n(t)}e^{\alpha X(t)}] \leq - E[e^{\alpha X^n(t)}] E[e^{\alpha X(t)}] 
\end{align*}
and by the same proposition again, $e^{\alpha X^n(t)} \rightarrow e^{\alpha X(t)}$ in $L^1$. Hence, the nonnegative term (\ref{kwl3}) is bounded from above by a quantity tending to zero.

Concerning term (\ref{kwl4}),
\begin{align*}
E[e^{2 \alpha X^n(t)}(e^{-\alpha \overline{X(t)}}-e^{-\alpha \overline{X^n(t)}})^2] 
&\leq E[e^{4 \alpha X^n(t)}]^{\frac{1}{2}} E[(e^{\alpha \overline{X(t)}}-e^{\alpha \overline{X^n(t)}})^4]^{\frac{1}{2}}\\
&= e^{2 \psi^n(\alpha) t} E[(e^{\alpha \overline{X(t)}}-e^{\alpha \overline{X^n(t)}})^4]^{\frac{1}{2}}
\end{align*}
by Cauchy-Schwarz and the definition of the characteristic exponent. Moreover 
\begin{align*}
 E[(e^{\alpha \overline{X(t)}}-&e^{\alpha \overline{X^n(t)}})^4]
\\
=&E[e^{ 4 \alpha \overline{X(t)}} - 4 e^{ - 3 \alpha \overline{X(t)} - \alpha \overline{X^n(t)}} + 6 e^{- 2 \alpha \overline{X(t)} - 2 \alpha \overline{X^n(t)}} - 4 e^{- \alpha \overline{X(t)} - 3 \alpha \overline{X^n(t)}}+  e^{- 4 \alpha \overline{X^n(t)}}].
\end{align*}
Also
\begin{equation*}
 -4 E[e^{ 3 \alpha \overline{X(t)} + \alpha \overline{X^n(t)}}] 
\text{ and }
 -4 E[e^{- \alpha \overline{X(t)} - 3 \alpha \overline{X^n(t)}}]
\end{equation*}
both tend to $-4 E[e^{- 4\alpha \overline{X(t)}}]$ using Proposition 2.2 in Dia \cite{DI13} once more.
Finally
\begin{align*}
 6 E[e^{- 2 \alpha \overline{X(t)} - 2 \alpha \overline{X^n(t)}}]\leq 6 \frac{E[e^{- 4\alpha \overline{X(t)}}]+E[e^{- 4\alpha \overline{X^n(t)}}]}{2}
\end{align*}
which tends to $3 E[e^{- 4\alpha \overline{X(t)}}]$ . Summing up, term (\ref{kwl4}) tends to zero.

Regarding (\ref{kwl2}), by the mean value theorem :
\begin{align*}
E[(\psi^n(\alpha))^2 (\int_0^T e^{-\alpha(\overline{X(s)}} (e^{\alpha X(s)} - e^{\alpha X^n(s)})ds)^2]
=(\psi^n(\alpha))^2 E[T^2 (e^{-\alpha(\overline{X(t_0)}} (e^{\alpha X(t_0)} - e^{\alpha X^n(t_0)})^2]
\end{align*}
for some $t_0$ in $[0,T]$, and we conclude by the same argument as in term (\ref{kwl3}).

Using the mean-value theorem on term (\ref{kwl2bis}) in a similar fashion and proceeding as in (\ref{kwl4}), we conclude that $M^n \rightarrow M$ in $\mathcal{I}^2_{\mathbb{P}}(\mu)$.

Notice in passing that $\nabla  M^n$ converges in $\mathcal{L}^2_{\mathbb{P}}(\mu)$ to
$$\nabla^{\mathbb{P}} M(t,z) := e^{-\alpha (\overline{X(t)}-X(t))} (1-e^{\alpha z});$$
the proof follows exactly the same lines as terms (\ref{kwl2}) and (\ref{kwl2bis}). This yields the following martingale representation formula for the Kella-Whitt martingale:
\begin{equation*}
 M(t) = E[M(t)] + \int_0^t\int_{(-\infty,0)} e^{-\alpha (\overline{X(s)}-X(s))} (1-e^{\alpha y}) \widetilde{J}(ds\ dy).
\end{equation*}

\end{proof}

\subsection{Supremum of a L\'evy process}

We illustrate the above results by deriving a  representation formula for the supremum of a L\'evy process. Such representations were by Shiryaev and Yor \cite{SY04}; the proof relies on the It\^o formula. Recently, R\'emillard and Renaud \cite{RR11} provide a derivatiuon of Shiryaev and Yor's result using Malliavin calculus.

Let us introduce the filtered probability space $(\Omega, \mathcal{F},\mathbb{F},\mathbb{P})$, where the filtration is generated by a Brownian motion $W$ and a Poisson measure $J$ with compensator $\mu$.
Let us then consider $X$, the square-integrable L\'evy process defined by,
$$
X(t) = X(0)+ \mu t  + \sigma W(t) + \int_0^t\int_{(-1,0)\cup(0,1)} z \widetilde{J}(ds dz) +  \int_0^t\int_{|z|\geq 1} z J(ds dz)
$$

For $T>0$, we are interested in finding a martingale representation for its supremum at $T$, denoted by $\overline{X}(T) := \sup_{0\leq s\leq T} X_s$.

\begin{theorem}
Let  $F_{t}(u) = \mathbb{P}(\overline{X}(t)\leq u)$ denote the distribution of $\overline{X}(T)$. Then
 \begin{equation*}
  \overline{X}(T) = E[\overline{X}(T)]+\int_0^T \nabla_W^\mathbb{P} P(t) dW(t) + \int_0^T\int_{\mathbb{R}_0} \nabla_J^{\mathbb{P}} P(t,z) \widetilde{J}(dt\ dz).
 \end{equation*}
with
\begin{align*}
\nabla_J^{\mathbb{P}} P(t,z) = \int_{\overline{X}-X(t) - z}^{\overline{X}-X(t)}F_{T-t}(u) du & \qquad & 
 \nabla_W^\mathbb{P} P(t) = \sigma F_{T-t}(\overline{X}(T)-X(t))
\end{align*}
\end{theorem}

To prove the above theorem, we consider the process
$$
P(t) = E\left[ \overline{X}(T)\middle|\mathcal{F}_t\right].
$$
We start from the same point as Shiryaev-Yor \cite{SY04} and R\'emillard-Renaud \cite{RR11}. Our starting point is the following identity:
\begin{equation*}
 P(t) = \overline{X}(t) + \int_{\overline{X}(t)-X(t)}^\infty F_{T-t}(u)du,
\end{equation*}
where $F_{t}(u) = \mathbb{P}(\overline{X}(t)\leq u)$.

As in the previous example, we focus first on the computations with a process $X^n$ that corresponds to $X$ with all the jumps of size less than $1/n$ truncated:
$$
X^n(t) = X^n(0)+ \mu t  + \sigma W(t) + \int_0^t\int_{z\in (\frac{1}{n},1)} z \widetilde{J}(ds dz) +  \int_0^t\int_{|z|\geq 1} z J(ds dz),
$$
and introduce
\begin{equation*}
 P^n(t): = E\left[ \overline{X^n}(T)\middle|\mathcal{F}_t\right] = \overline{X^n}(t) + \int_{\overline{X^n}(t)-X^n(t)}^\infty F_{T-t}(u)du),
\end{equation*}
where $F_{T-t}(u) = \mathbb{P}(\overline{X}(T-t)> u)$.

One sees that $P^n$ has a functional representation that is not vertically differentiable at the points where $X^n$ reaches its supremum, because the supremum itself is not vertically differentiable at these points.
To remedy this, we introduce the following Laplace softsup approximation defined below.

\begin{lemma}
\label{softsup}
 For a c\`adl\`ag function $f$, the associated \textit{Laplace softsup}
 $$L(f,t) := \frac{1}{a} \log(\int_0^t e^{a f(s)} ds),$$
satisfies
$$\lim_{a \to \infty} L^a(f,t) = \sup_{0\leq s\leq t} f(s)$$
\end{lemma}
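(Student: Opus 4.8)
The plan is to recognise $L^a(f,t)$ (I write the Laplace parameter $a$ explicitly) as a LogSumExp, or ``soft-max'', functional and to prove the convergence by the classical Laplace-principle argument that squeezes it between $M-\epsilon$ and $M+o(1)$, where I set $M := \sup_{0\le s\le t} f(s)$. As a preliminary I would note that a c\`adl\`ag function on the compact interval $[0,t]$ is bounded, so $M<\infty$ and $\int_0^t e^{af(s)}\,ds\in(0,\infty)$ for every $a>0$; hence $L^a(f,t)$ is well defined and the two-sided estimate below makes sense.

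For the upper bound I would use the crude domination $f(s)\le M$ for all $s$, which gives
\[
\int_0^t e^{af(s)}\,ds \le t\,e^{aM}, \qquad L^a(f,t) \le M + \frac{\log t}{a},
\]
so that $\limsup_{a\to\infty} L^a(f,t)\le M$. For the lower bound I fix $\epsilon>0$ and aim to show that the superlevel set $E_\epsilon := \{s\in[0,t] : f(s) > M-\epsilon\}$ has positive Lebesgue measure $\ell:=\lambda(E_\epsilon)>0$. Granting this,
\[
\int_0^t e^{af(s)}\,ds \ge \int_{E_\epsilon} e^{af(s)}\,ds \ge \ell\, e^{a(M-\epsilon)}, \qquad L^a(f,t) \ge M - \epsilon + \frac{\log \ell}{a},
\]
whence $\liminf_{a\to\infty} L^a(f,t)\ge M-\epsilon$; letting $\epsilon\downarrow 0$ and combining with the upper bound yields $\lim_{a\to\infty}L^a(f,t)=M$.

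The main obstacle is exactly the positivity of $\ell$, and this is where c\`adl\`ag regularity must do the work. I would choose $s_n$ with $f(s_n)\to M$ and, by compactness of $[0,t]$, a convergent subsequence $s_n\to s_0$. If the limit is approached from the right, right-continuity forces $f(s_0)=M$ and then $f>M-\epsilon$ on an entire right-neighbourhood $[s_0,s_0+\delta)$, so $\lambda(E_\epsilon)\ge\delta>0$; if it is approached from the left, the existence of the left limit $f(s_0-)=M$ gives the same control on an interval $(s_0-\delta,s_0)$. The genuinely delicate point is that the supremum may be attained only at an isolated instant through an upward jump (for example at the right endpoint $t$), in which case $E_\epsilon$ can reduce to a single point of measure zero and $L^a(f,t)$ in fact converges to $\operatorname{ess\,sup}_{[0,t]}f < M$. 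The statement therefore tacitly requires $\sup_{[0,t]}f=\operatorname{ess\,sup}_{[0,t]}f$, a condition satisfied (a.s.) by the process trajectories $f=X(\cdot,\omega)$ to which the lemma is applied; I would make this hypothesis explicit, after which the lower bound goes through as above.
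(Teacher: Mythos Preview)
Your approach is essentially the paper's: the same crude upper bound $L^a(f,t)\le M+\tfrac{\log t}{a}$, and a lower bound obtained by exhibiting an interval of length $\delta>0$ on which $f>M-\epsilon$, using right-continuity when the supremum is attained and the existence of left limits when it is not. The paper organises the lower bound as a two-case split (maximum attained versus not attained) rather than through your compactness/subsequence extraction, but the analytic content is identical.

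Where you go further than the paper is in spotting a genuine gap. Your observation that an upward jump at the right endpoint $t$ can force $\sup_{[0,t]}f>\operatorname{ess\,sup}_{[0,t]}f$, so that $L^a(f,t)\to\operatorname{ess\,sup}_{[0,t]}f<M$, is correct: the function $f=\mathds{1}_{\{t\}}$ on $[0,t]$ is c\`adl\`ag and gives $L^a(f,t)\equiv\tfrac{\log t}{a}\to 0\neq 1=M$. The paper's own proof overlooks this: its ``supremum attained at $t_0$'' case invokes a right-neighbourhood $(t_0,t_0+\delta)$, which is empty when $t_0=t$. Your diagnosis that the lemma tacitly needs $\sup=\operatorname{ess\,sup}$ (equivalently, that the maximum not be realised solely by a jump at the terminal time) is the right repair, and your remark that this holds $\mathbb{P}$-a.s.\ for the L\'evy trajectories used later---since the compensator is absolutely continuous in time, a jump at the fixed time $t$ has probability zero---is exactly what is needed to salvage the application.
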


\begin{proof}
 This result can be found for continuous functions in \cite[Lemma 7.30]{MP10}. The proof is similar in the c\`adl\`ag case.
 \begin{equation}
\label{softsup_bound}
 \frac{1}{a} \log(\int_0^t e^{a f(s)} ds)\leq  \frac{1}{a} \log(t \sup_{0\leq s\leq t }e^{a f(s)}) = \sup_{0\leq s\leq t} f(s) + \frac{\log(t)}{a},
 \end{equation}
using continuities of the exponential and logarithm functions. Having $a \to\infty$ yields the ``$\leq$'' inequality. For the converse inequality, let us consider two cases.

In the first case, let us assume that the supremum is attained at a certain point, i.e. there exists $t_0$ such that $f(t_0) = \max_{0\leq s\leq t} f(s)$. Then, by right-continuity of $f$, for any $\epsilon>0$, there exists $\delta>0$ such that,for $r \in (t_0, t_0+\delta)$, $f(r) \geq f(t_0)-\epsilon$. So
\begin{align*}
 \frac{1}{a} \log (\int_0^t e^{a f(s)} ds ) \geq \frac{1}{a}\log(\int_{t_0}^{t_0+\delta} e^{a f(s)} ds)\geq \frac{1}{a}\log(\int_{t_0}^{t_0+\delta} e^{a (f(t_0)-\epsilon)} ds) = f(t_0) - \epsilon +\frac{1}{a}log(\delta).
\end{align*}
Taking the limit $a\to \infty$ yields the result, as $\epsilon$ is arbitrary.

In the second case where the sup is not reached, the c\`adl\`ag property of $f$ entails that there exists $t_1$ such that
$$\sup_{0\leq s\leq t} f(s) = \lim_{\substack{u\to t_1\\ u<t_1}} f(u) =: f(t_1-).$$
Then, for any $\epsilon$, there exists $\delta>0$ such that  $f(r) \geq f(t_1)-\epsilon$ for $r \in (t_1-\delta, t_1)$, since $f$ is l\`ag.
Using the same computations as in the first case, this yields the result.
\end{proof}

\begin{lemma}
\label{cv_softsup}
For a L\'evy process $X$, and any $t > 0$, one has
\begin{equation*}
 \lim_{a \to \infty} E[|L^a(X,t) - \overline{X}(t)|^2] = 0.
\end{equation*}
\end{lemma}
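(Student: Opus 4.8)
The plan is to combine the pathwise almost-sure convergence already furnished by Lemma \ref{softsup} with a uniform $L^2$ domination, so that the conclusion follows from the dominated convergence theorem. Applying Lemma \ref{softsup} to the (a.s.) c\`adl\`ag trajectory $s \mapsto X(s,\omega)$ gives $L^a(X,t) \to \overline{X}(t)$ $\mathbb{P}$-a.s. as $a \to \infty$. It therefore remains to produce a single square-integrable random variable dominating $|L^a(X,t)|$ uniformly in $a$ (say for $a \geq 1$): then $|L^a(X,t) - \overline{X}(t)|^2$ is dominated by an integrable random variable while converging to $0$ a.s., and the result drops out.

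First I would record elementary two-sided bounds on the softsup. Writing $\underline{X}(t) := \inf_{0 \le s \le t} X(s)$ and using $\underline{X}(t) \le X(s) \le \overline{X}(t)$ for every $s \in [0,t]$, the integrand in $\int_0^t e^{aX(s)}\,ds$ obeys $t\,e^{a\underline{X}(t)} \le \int_0^t e^{aX(s)}\,ds \le t\,e^{a\overline{X}(t)}$. Taking $\frac{1}{a}\log(\cdot)$ yields
\[
\underline{X}(t) + \frac{\log t}{a} \;\le\; L^a(X,t) \;\le\; \overline{X}(t) + \frac{\log t}{a}.
\]
For $a \ge 1$ this gives the $a$-free bound $|L^a(X,t)| \le 2 \sup_{0 \le s \le t} |X(s)| + |\log t| =: G$, since both $\overline{X}(t)$ and $\underline{X}(t)$ are dominated by $\sup_{0\le s\le t}|X(s)|$.

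The hard part will be checking $G \in L^2(\mathbb{P})$, i.e. $E[\sup_{0\le s\le t}|X(s)|^2] < \infty$. Here I would invoke that $X$ is a square-integrable L\'evy process: decomposing $X(s) = X(0) + bs + N(s)$ into its deterministic drift and a square-integrable martingale part $N$ (via the L\'evy--It\^o decomposition, the Brownian and compensated-jump components forming $N$), Doob's $L^2$ maximal inequality gives $E[\sup_{0\le s\le t} N(s)^2] \le 4\,E[N(t)^2] < \infty$, while the drift contributes only the bounded term $|b|t$. Hence $\sup_{0\le s\le t}|X(s)| \in L^2$ and so $G \in L^2$.

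Finally, combining the a.s.\ convergence from Lemma \ref{softsup} with $|L^a(X,t) - \overline{X}(t)|^2 \le (G + |\overline{X}(t)|)^2 \le 4G^2 \in L^1(\mathbb{P})$, the dominated convergence theorem yields $E[|L^a(X,t) - \overline{X}(t)|^2] \to 0$, which is the assertion. The only subtlety worth flagging is confirming integrability of the auxiliary quantities $\overline{X}(t)$ and $\underline{X}(t)$, but both are controlled by $\sup_{0\le s\le t}|X(s)|$ and hence lie in $L^2$ by the same maximal-inequality argument, so no separate work is needed.
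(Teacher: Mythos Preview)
Your argument is correct and follows the same overall strategy as the paper---pointwise convergence from Lemma~\ref{softsup} combined with dominated convergence, using the upper bound $L^a(X,t)\le \overline{X}(t)+\tfrac{\log t}{a}$. The execution differs in a useful way: the paper expands the square and tries to pass to the limit term by term, whereas you dominate $|L^a-\overline{X}|^2$ directly. To do this you supply the matching lower bound $L^a(X,t)\ge \underline{X}(t)+\tfrac{\log t}{a}$, which the paper omits, and then control $\sup_{s\le t}|X(s)|$ in $L^2$ via Doob's maximal inequality applied to the martingale part of the L\'evy--It\^o decomposition. This makes the domination step fully explicit; the paper's version records only the upper bound on $L^a$ and leaves the integrable majorant (and the square-integrability of $\overline{X}(t)$ itself) implicit.
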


\begin{proof}
\begin{align*}
E[|L^a(X,t) - \overline{X}(t)|^2] &= E[|L^a(X,t)|^2] + E[|\overline{X}(t)|^2] - 2 E[L^a(X,t)\overline{X}(t)].
\end{align*}
Moreover, for $a>1$, we have that
$$
L^a(X,T)\leq \overline{X}(T) + \frac{\log(T)}{a}\leq \overline{X}(T) + (\log(T))^+,
$$
by Equation \ref{softsup_bound}, entailing, by dominated convergence
$$
\lim_{a\to \infty} E[L^a(X,T)] = \overline{X}(T).
$$
and
$$
\lim_{a\to \infty} E[|L^a(X,T)|^2] = E[|\overline{X}(T)|^2],
$$
thus yielding convergence.
 \end{proof}

Going back to $P^n$, we introduce the process $Y^{a,n}(t)$:
\begin{equation*}
 Y^{a,n}(t) := L^a(X^n,t) + \int_{L^a(X^n,t)-X^n(t)}^\infty F_{T-t}(u) du.
\end{equation*}

\begin{lemma}
\label{cv_Y}
For any t, $Y^{a,n}(t)$ is square integrable, and
 $$
 \lim_{a\to \infty} E[|Y^{a,n}(t)-P^n(t)|^2] = 0
 $$

\end{lemma}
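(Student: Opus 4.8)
The plan is to isolate the only place where $a$ enters, namely through the Laplace softsup $L^a(X^n,t)$, and to transfer the $L^2$-convergence $L^a(X^n,t)\to\overline{X^n}(t)$ supplied by Lemma \ref{cv_softsup} to $Y^{a,n}$. First I would record that $X^n$ is itself a square-integrable L\'evy process, so Lemma \ref{cv_softsup} applies verbatim to it and gives $E[|L^a(X^n,t)-\overline{X^n}(t)|^2]\to 0$ as $a\to\infty$. Writing $\Phi(c):=\int_c^\infty \mathbb{P}(\overline{X}(T-t)>u)\,du=E[(\overline{X}(T-t)-c)^+]$ for the (finite) tail integral appearing in both expressions, and reading the lower limit in $Y^{a,n}$ consistently with $P^n$ (i.e. with $X^n(t)$), one has $Y^{a,n}(t)=L^a(X^n,t)+\Phi\big(L^a(X^n,t)-X^n(t)\big)$ and $P^n(t)=\overline{X^n}(t)+\Phi\big(\overline{X^n}(t)-X^n(t)\big)$, so that the two processes differ only through the substitution of $\overline{X^n}(t)$ by $L^a(X^n,t)$.

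The key observation is that $\Phi$ is $1$-Lipschitz: since $\Phi(c)=E[(M-c)^+]$ with $M=\overline{X}(T-t)$, we have $|\Phi(c_1)-\Phi(c_2)|\le E\big[|(M-c_1)^+-(M-c_2)^+|\big]\le|c_1-c_2|$. Consequently
\begin{equation*}
 |Y^{a,n}(t)-P^n(t)|\le |L^a(X^n,t)-\overline{X^n}(t)|+\big|\Phi(L^a(X^n,t)-X^n(t))-\Phi(\overline{X^n}(t)-X^n(t))\big|\le 2\,|L^a(X^n,t)-\overline{X^n}(t)|.
\end{equation*}
Squaring, taking expectations and invoking Lemma \ref{cv_softsup} then yields $E[|Y^{a,n}(t)-P^n(t)|^2]\le 4\,E[|L^a(X^n,t)-\overline{X^n}(t)|^2]\to 0$, which is the claimed convergence.

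For square-integrability, I would sandwich the softsup: the elementary bounds behind Lemma \ref{softsup} give $\underline{X^n}(t)+\tfrac{\log t}{a}\le L^a(X^n,t)\le \overline{X^n}(t)+\tfrac{(\log t)^+}{a}$, where $\underline{X^n}(t)$ and $\overline{X^n}(t)$ are the running infimum and supremum of $X^n$ on $[0,t]$. Both lie in $L^2$ by Doob's maximal inequality applied to the square-integrable martingale part of $X^n$ together with its finite-variation drift, so $\sup_{a\ge 1}E[|L^a(X^n,t)|^2]<\infty$. Combining this with the crude bound $0\le \Phi(c)\le E[M^+]+c^-$ gives $|Y^{a,n}(t)|\le C\big(1+|L^a(X^n,t)|+|X^n(t)|\big)$ and hence $E[|Y^{a,n}(t)|^2]<\infty$.

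The main obstacle is not any single estimate but getting the two structural facts right: first, reading the integral term as the finite tail integral $E[(M-c)^+]$, so that $Y^{a,n}$ is well defined and $\Phi$ is genuinely $1$-Lipschitz; and second, controlling the softsup in $L^2$ uniformly in $a$ through the running supremum and infimum of $X^n$. Once these are in place the convergence is immediate from Lemma \ref{cv_softsup}, with no need to analyse $F_{T-t}$ or the jump truncation any further.
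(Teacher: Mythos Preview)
Your argument is correct and matches the paper's own proof: both bound the integral term by observing that $|F_{T-t}|\le 1$ (your phrasing as ``$\Phi$ is $1$-Lipschitz'' is exactly this), arrive at $E[|Y^{a,n}(t)-P^n(t)|^2]\le 4\,E[|L^a(X^n,t)-\overline{X^n}(t)|^2]$, and then invoke Lemma~\ref{cv_softsup}. Your square-integrability discussion is more thorough than the paper's (which simply cites $L^a(X^n,t)\le \overline{X^n}(t)+(\log T)^+$), but the substance is the same.
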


\begin{proof}
 The square-integrability of $Y^{a,n}$ stems from the previous lemma, using that $L^a(X^n,t) \leq \overline{X^n}(t) +(\log(T))^+$ for $a>1$.
 Now,
 \begin{align*}
  E[|Y^{a,n}(t)-P^n(t)|^2]\leq 2 E[|L^a(X^n,t)-\overline{X}(t)|^2] + 2 E[|\int_{L^a(X^n,t)-X^n(t)}^{\overline{X^n}(t)-X^n(t)}F_{T-t}(u) du|^2]
 \end{align*}
We get the following inequality
\begin{align*}
 [|\int_{L^a(X^n,t)-X^n(t)}^{\overline{X^n}(t)-X^n(t)}F_{T-t}(u) du|^2]\leq E[|L^a(X^n,t)-\overline{X^n}(t)|^2]
\end{align*}
by bounding the integrand by $1$ in the left-hand side. So
$$
E[|Y^{a,n}(t)-P^n(t)|^2]\leq  4 E[|L^a(X^n,t)-\overline{X^n}(t)|^2]
$$
 
Taking the limit $a \to \infty$ and using Lemma \ref{cv_softsup} yields
$$
\lim_{a \to \infty}E[|Y^{a,n}(t)-P^n(t)|^2] = 0.
$$
\end{proof}

For fixed $t\in[0,T]$ let us now introduce the following family of square-integrable martingales
\begin{equation*}
 (Z^{a,n}(s,t))_{s \in[0,t]} = E[Y^{a,n}(t)|\mathcal{F}_s].
\end{equation*}

\begin{lemma}
\begin{equation*}
 \lim_{a\to\infty} E[|Z^{a,n}(s,t) -E[P^n_t|\mathcal{F}_s]|^2] = 0
\end{equation*}
\end{lemma}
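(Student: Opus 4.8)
The plan is to reduce the claim entirely to Lemma \ref{cv_Y} via the contraction property of conditional expectation on $L^2$. First I would use the definition $Z^{a,n}(s,t) = E[Y^{a,n}(t)\mid\mathcal{F}_s]$ together with the linearity of the conditional expectation to write
$$
Z^{a,n}(s,t) - E[P^n_t\mid\mathcal{F}_s] = E\big[Y^{a,n}(t) - P^n(t)\,\big|\,\mathcal{F}_s\big].
$$
Both random variables on the right are square-integrable: $Y^{a,n}(t)$ by the first part of Lemma \ref{cv_Y}, and $P^n(t) = E[\overline{X^n}_T\mid\mathcal{F}_t]$ as a conditional expectation of the square-integrable variable $\overline{X^n}_T$. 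Hence the identity is meaningful in $L^2(\mathbb{P})$.

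Next I would invoke the conditional Jensen inequality applied to the convex map $x \mapsto x^2$, i.e. the fact that taking $\mathcal{F}_s$-conditional expectation is an $L^2$-contraction, to obtain
$$
E\big[|Z^{a,n}(s,t) - E[P^n_t\mid\mathcal{F}_s]|^2\big] = E\Big[\big|E[Y^{a,n}(t) - P^n(t)\mid\mathcal{F}_s]\big|^2\Big] \leq E\big[|Y^{a,n}(t) - P^n(t)|^2\big].
$$
Letting $a\to\infty$, the right-hand side tends to zero by Lemma \ref{cv_Y}, which establishes the stated convergence for each fixed $s\in[0,t]$.

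There is no genuine obstacle here: once the $L^2$-convergence $Y^{a,n}(t)\to P^n(t)$ has been secured in Lemma \ref{cv_Y}, passing to the conditioned versions is automatic since conditioning cannot increase the $L^2$-norm, and the resulting bound does not depend on $s$. The single point worth checking is that the convergence supplied by Lemma \ref{cv_Y} is truly in $L^2(\mathbb{P})$ (not merely in probability), so that the contraction estimate closes the argument. As a bonus, since for fixed $t$ the processes $(Z^{a,n}(s,t))_{s\in[0,t]}$ are martingales in $s$, Doob's $L^2$ maximal inequality upgrades the above to uniform-in-$s$ convergence, i.e. $\lim_{a\to\infty} E[\sup_{s\in[0,t]}|Z^{a,n}(s,t) - E[P^n_t\mid\mathcal{F}_s]|^2] = 0$, which is convenient for the subsequent representation argument.
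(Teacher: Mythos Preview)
Your proof is correct and follows essentially the same approach as the paper: write the difference as a single conditional expectation, apply conditional Jensen to bound by $E[|Y^{a,n}(t)-P^n(t)|^2]$, and conclude via Lemma~\ref{cv_Y}. Your additional remark on Doob's maximal inequality giving uniform-in-$s$ convergence is a nice observation beyond what the paper states.
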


\begin{proof}
 \begin{align*}
   E[|Z^{a,n}(s,t) -E[P^n_t|\mathcal{F}_s]|^2] &= E[|E[Y^{a,n}(t)-P^n(t)|\mathcal{F}_s]|^2],\\
   \intertext{which by Jensen's inequality,}
   &\leq E[E[|Y^{a,n}(t)-P(t)|^2|\mathcal{F}_s]] = E[Y^{a,n}(t)-P^n(t)|^2],
 \end{align*}
which converges to zero by Lemma \ref{cv_Y}.
\end{proof}

In particular, this entails that for every $t$, $\lim_{a \to \infty} Z(t,t,a) = P^n_t$.
Let us now compute the functional It\^o operators of $Z^n(t,t,a)$. At time $s=t$, $Z^n(t,t,a) = Y^{a,n}(t)$, and $Y^{a,n}$ has a pathwise functional representation,
allowing to compute explicitly the operators.

\begin{equation*}
 \nabla_J (Z^{a,n})(t,t,z) = \int_{L^a(X^n,t)-X^n(t) - z}^{L^a(X^n,t)-X^n(t)}F_{T-t}(u) du,
\end{equation*}
and
\begin{equation*}
  \nabla_W (Z^{a,n})(t,t) = \lim_{h\to 0} \frac{1}{h} \int_{L^a(X^n,t)-X^n(t) - \sigma h}^{L^a(X^n,t)-X^n(t)}F_{T-t}(u) du = F_{T-t}(L^a(X^n,t)-X^n(t)).
\end{equation*}

In a way similar to Lemma \ref{cv_Y}, we can show that
\begin{align*}
 \lim_{a\to\infty} \nabla  (Z^{a,n})(t,t,z) &=  \int_{\overline{X^n}-X^n(t) - z}^{\overline{X^n}-X^n(t)}F_{T-t}(u) du =: \nabla  P^n(t,z)\\
 \lim_{a\to\infty} \nabla_X (Z^{a,n})(t,t) &= \sigma F_{T-t}(\overline{X^n}(t)-X^n(t))
\end{align*}
and these quantities must equate to $\nabla  P^n$ and $\nabla_W P^n$ respectively.

We can conclude that at time $T$ 
\begin{equation*}
 P^n(T)=\overline{X^n}(T) = E[P^n(T)] + \int_0^T \nabla_W P(t) dW(t) + \int_0^T \int_{(-\infty, -\frac{1}{n})\cup(\frac{1}{n}, \infty)} \nabla  P(t,z) \widetilde{J}(dt dz)
\end{equation*}
with the integrands defined as above.

In case the L\'evy process has finite activity, we are done, as for a $n$ large enough, $P^n=P$.
Otherwise, all that remains to do is to remove the truncation of the small jumps. In this case, however, this is straightforward, as
\begin{align*}
 E[|P^n(T)-P(T)|^2] = E[|\overline{X^n}(T)-\overline{X}(T)|^2],
\end{align*}
which tends to zero, using the result of Dia \cite{DI13}, p11. 
This yields the convergence of $\nabla_W P^n$ and $\nabla  P^n$ to $\nabla_W P$, which we need to compute. As $X^n \to X$ a.s., we also have that $\overline{X^n} \to \overline{X}$ a.s. and $\nabla^{\mathbb{P}} P$ is computable in a straightforward way:
$$
\nabla^{\mathbb{P}} P(t,z) = \int_{\overline{X}-X(t) - z}^{\overline{X}-X(t)}F_{T-t}(u) du.
$$

Regarding $\nabla_X^{\mathbb{P}}P$, if $\sigma = 0$ then  $\nabla_X^{\mathbb{P}}P \equiv 0$. Assuming $\sigma \neq 0$ from now on, then a good candidate is $\sigma F_{T-t}(\overline{X}(t)-X(t))$. Let us investigate the convergence:
\begin{align*}
|F_{T-t}&(\overline{X}(T)-X(t))-F_{T-t}(\overline{X^n}(T)-X^n(t))|^2\\
&\leq 2 |F_{T-t}(\overline{X}(t)-X(t))-F_{T-t}(\overline{X^n}(t)-X(t))|^2 \\
&+ 2 |F_{T-t}(\overline{X^n}(t)-X(t))- F_{T-t}(\overline{X}(T)-X^n(t))|^2
\end{align*}
Rewriting the first term of the right-hand side, we have
\begin{align*}
\int_0^T |F_{T-t}&(\overline{X}(t)-X(t))-F_{T-t}(\overline{X^n}(t)-X(t))|^2 dt\\
&=\int_0^T|P( X(t) > \overline{X}(t)- \overline{X(T-t)} - P(X(t)> \overline{X^n}(t)- \overline{X(T-t)})|^2 dt
\end{align*}
which converges to zero by dominated convergence since $\overline{X}^n\to \overline{X}$ a.s.
As for the second term:
\begin{align*}
\int_0^T |F_{T-t}&(\overline{X}(t)-X(t))- F_{T-t}(\overline{X}(T)-X^n(t))|^2 dt\\
&= \int_0^T |P(X(t) > \overline{X^n}(t) -\overline{X}(T-t))- P(X^n(t) > \overline{X^n}(t) -\overline{X}(T-t))|^2 dt\\
&\leq \int_0^T \sup_{x\in\mathbb{R}} |P(X(t) >x)- P(X^n(t) > x)|^2 dt,
\end{align*}
and this quantity tends to zero by dominated convergence and using Proposition 2.10 in Dia \cite{DI13}.
We thus obtain the following representation of the supremum of a L\'evy process:
\begin{equation*}
P(T)=\overline{X}(T) = E[P(T)] + \int_0^T \nabla_X P(t) dW(t) + \int_0^T \int_{\mathbb{R}_0} \nabla_J  P(t,z) \widetilde{J}(dt dz)
\end{equation*}
with 
\begin{align*}
\nabla^{\mathbb{P}} P(t,z) = \int_{\overline{X}-X(t) - z}^{\overline{X}-X(t)}F_{T-t}(u) du& \qquad &
 \nabla_W P(t) = \sigma F_{T-t}(\overline{X}(T)-X(t)).
\end{align*}

\appendix
\section{A density result}
\label{sec_density}
In this subsection, we prove that the simple random fields are dense in
$\mathcal{L}^2_{\mathbb{P}}(\mu)$ for a fairly general measure $\mu$. This is a
common result in the continuous case   \cite{RY99} that extends to
L\'evy
compensators using some isometry properties between Hilbert spaces  \cite{AP04}.
We extend this result to the case of t random and/or time-inhomogeneous compensators.
We make the following assumption on the compensator:
\begin{assumption}\label{asB}
The compensator $$\mu: \mathcal{B}([0,T]\times \mathbb{R}^d_0) \times \Omega \to
\mathbb{R}$$ has the form
$\mu(dt, dz, \omega) = \nu(s,dz,\omega) dt $ 
where, for $A\in \mathcal{B}([0,T])$, $\nu$ is finite on every Borel set $A\times B \in
\mathcal{B}([0,T]\times \mathbb{R}^d_0)$ such that $0 \not\in \overline{B}$ ($\overline{B}$ denoting the closure of $B$).
\end{assumption}

Under these assumptions, one has the following theorem.

\begin{theorem}
\label{thmdensity}
Let $\mathfrak{R}$ be the vector space of  random fields  of the form
$$\psi(t,z,\omega) = \sum\limits_{i,k = 1}^{n,m} \psi_{ik}(\omega)\mathds{1}_{(t^n_{i}(Z_k,\omega),t^n_{i+1}(Z_k,\omega)]}(t)\mathds{1}_{Z_k}(z),$$
 where \begin{itemize}\item $K\subset\mathbb{R}^d_0$ is a Borel set such that $0\not\in \overline{K}$, 
 \item$Z_k\subset\mathbb{R}^d_0$ disjoint Borel sets such that $0 \not\in \overline{Z_k}$,
 \item $\psi_{ij}$ $\mathcal{F}_{t_i}$-measurable, and
 \item
 $(t^n_i)_{i=0}^{2^{2^n}}$ are finite stopping times given by 
 $$t^n_i(K,\omega) = \inf\{t \in [0,T]| \mu([0,t]\times K,\omega)\geq
i2^{-n}\} \wedge T \wedge \inf\{t \in [0,T]| \mu([0,t]\times K,\omega)\geq
2^n\}$$.
\end{itemize} 
Then $\mathfrak{R}$ is dense in $\mathcal{L}^2_{\mathbb{P}}(\mu)$.
\end{theorem}
We shall  write $t^n_{i}$ for $t^n_i(\omega,Z)$ to alleviate the notation
when there is no risk of confusion.

{\bf Proof:} Consider a predictable random field $f:[0,T]\times \mathbb{R}^d_0\times \Omega\to
\mathbb{R}^d$, $f \in \mathcal{L}^2_{\mathbb{P}}(\mu)$. Let us first assume
that $f$ is bounded.
\medskip
Let $(Z_j)_{j \in \mathbb{N}}$ be a sequence
of sets of $\mathbb{R}^d_0$, $0\not\in \overline{Z_j}$, such that for all $j$ and all $\omega \in \Omega$,
$\mu([0,T]\times Z_j)< \infty$. This is possible because of the
$\sigma$-finiteness of $\mu$ stemming from Assumption \ref{asB}. 

\begin{remark}
 Notice that if $\mu$ is a predictable measure, then the stopping times defined above are also predictable. 
\end{remark}

For $m,n \geq 1,$ define
\begin{align*}
A_{nm}&(f)(t,z)\\
&= \sum\limits_{i=1}^{2^{2^n}-1}\sum\limits_{k=1}^m
\frac{\mathds{1}_{(t^n_{i}(Z_k,\omega),t^n_{i+1}(Z_k,\omega)]}(t) \mathds{1}_{z \in
Z_j}}{\mu((t^n_{i-1}(Z_k,\omega),t^n_i(Z_k,\omega)]\times Z_k,
\omega)}\left(\int_{(t^n_{i-1}(Z_k,\omega),t^n_{i}(Z_k,\omega)]\times
Z_k} f(s,y,\omega) \mu(ds dz, \omega) \right),
\end{align*}
with the convention that $0/0=0$, which occurs when $t^n_{i-1} = t^n_{i}$.
\begin{lemma}
Let$f\in\mathcal{L}^2_{\mathbb{P}}(\mu)$, $n,m\geq 1$. Then $A_{nm}(f)\in\mathfrak{R}$ and 
\begin{enumerate}
 \item $\|A_{mn}(f) \|_{\infty} \leq \| f\|_\infty$;
 \item $\|A_{mn}(f) \|_{\mathcal{L}^2_{\mathbb{P}}(\mu)} \leq \|
f\|_{\mathcal{L}^2_{\mathbb{P}}(\mu)}$.
 \end{enumerate}
\end{lemma}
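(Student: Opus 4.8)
The plan is to recognise that, for each fixed $\omega$, the operator $A_{nm}$ is a block-averaging (conditional-expectation-type) operator with a one-step lag. On the rectangle $B_{ik} := (t^n_i(Z_k,\omega),t^n_{i+1}(Z_k,\omega)]\times Z_k$ the field $A_{nm}(f)$ is constant in $(t,z)$ and equals the $\mu$-average
$$c_{ik} := \frac{1}{\mu(B_{i-1,k},\omega)}\int_{B_{i-1,k}} f(s,y,\omega)\,\mu(ds\,dy,\omega)$$
of $f$ over the \emph{preceding} rectangle $B_{i-1,k} := (t^n_{i-1}(Z_k,\omega),t^n_i(Z_k,\omega)]\times Z_k$, with $c_{ik}=0$ when $\mu(B_{i-1,k},\omega)=0$ by the convention $0/0=0$. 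Since both quantities in the statement are norms built from pointwise moduli, I would prove each inequality pathwise, i.e. for every fixed $\omega$, and then take expectations. The time-intervals lying between consecutive stopping times are disjoint, and the $Z_k$ are disjoint by hypothesis, so the rectangles $\{B_{ik}\}$ are pairwise disjoint and $A_{nm}(f)$ vanishes off their union.

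For property (1) I would simply bound an average by a supremum: $|c_{ik}| \le \mu(B_{i-1,k},\omega)^{-1}\int_{B_{i-1,k}}|f|\,d\mu \le \|f\|_\infty$. As the $B_{ik}$ are disjoint, taking the supremum over $(t,z,\omega)$ gives $\|A_{nm}(f)\|_\infty \le \|f\|_\infty$ at once.

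For property (2) the engine is Jensen's inequality on each rectangle. Applied to $c_{ik}$ with the probability measure $\mu(\cdot,\omega)/\mu(B_{i-1,k},\omega)$ it gives $|c_{ik}|^2 \le \mu(B_{i-1,k},\omega)^{-1}\int_{B_{i-1,k}}|f|^2\,d\mu$, hence
$$\int_{B_{ik}}|A_{nm}(f)|^2\,d\mu = |c_{ik}|^2\,\mu(B_{ik},\omega) \le \frac{\mu(B_{ik},\omega)}{\mu(B_{i-1,k},\omega)}\int_{B_{i-1,k}}|f|^2\,d\mu.$$
The crucial input is the comparison $\mu(B_{ik},\omega) \le \mu(B_{i-1,k},\omega)$: by absolute continuity of $\mu$ in time the map $t\mapsto\mu([0,t]\times Z_k,\omega)$ is continuous, so for interior indices the infimum defining $t^n_i$ is attained with equality and every rectangle carries the same mass $2^{-n}$, while the final (capped) rectangle carries mass $\le 2^{-n}$ and the degenerate ones carry mass $0$; in each case the current block gains no mass over its predecessor. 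This reduces the display to $\int_{B_{ik}}|A_{nm}(f)|^2\,d\mu \le \int_{B_{i-1,k}}|f|^2\,d\mu$. Summing over $i$ — so that $i\mapsto i-1$ reindexes the disjoint preceding intervals, whose union lies in $[0,T]$ — and then over the disjoint $Z_k$, $k\le m$, telescopes to $\int_0^T\int_{\mathbb{R}^d_0}|A_{nm}(f)|^2\,d\mu \le \int_0^T\int_{\mathbb{R}^d_0}|f|^2\,d\mu$ pathwise, and applying $E[\cdot]$ yields (2).

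The main obstacle is precisely the measure-comparison $\mu(B_{ik},\omega)\le\mu(B_{i-1,k},\omega)$ together with the bookkeeping created by the one-step lag between the interval $(t^n_i,t^n_{i+1}]$ on which the value is placed and the interval $(t^n_{i-1},t^n_i]$ over which $f$ is averaged. I would handle it by treating the interior blocks (all of equal mass $2^{-n}$, where the comparison is an equality) and the boundary blocks near the caps $T$ and $2^n$ (where $A_{nm}(f)$ is placed on a lighter partial interval, or is identically zero) separately, verifying in every case that no mass is gained so that the contraction survives the reindexing. All remaining steps — Jensen, disjointness, and passing the pathwise bounds through the expectation — are routine.
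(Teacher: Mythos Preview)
Your proposal is correct and follows essentially the same route as the paper: for (1) bound the block average by the supremum, and for (2) square the average via Jensen/Cauchy--Schwarz and then control the ratio $\mu(B_{ik},\omega)/\mu(B_{i-1,k},\omega)\le 1$ by the same case analysis on interior versus boundary blocks determined by the stopping times $t^n_i$. The only cosmetic differences are that the paper phrases (1) via ``the average is attained somewhere'' rather than directly bounding by the sup, and invokes Cauchy--Schwarz where you invoke Jensen; the substance is identical.
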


\begin{proof}
\begin{enumerate}
 \item For all $\omega$, there are three cases to be considered:
 \begin{enumerate}
  \item for all $z \not\in \cup_{k=1}^m Z_k$, then $A_{mn}(f) = f = 0$;
  \item for all $t \in (T\wedge2^{2^n},T]$, $A_{mn}(f) = 0 \leq |f|$;
  \item otherwise, for all $\omega$, 
\begin{equation*}
A_{mn}(f)(t,z,\omega) =
\frac{1}{\mu((t_{i^*-1},t_{i^*}] \times Z_{k^*},\omega)}\int_{(t_{i^*-1},t_{i^*}]
\times Z_{k*}}f(s,y,\omega) \mu(ds\ dy,\omega),
\end{equation*}
for a unique  couple $(i^*,k^*)$ with $i^*\in 0..2^{2^n}-1$ and $k^* \in 1..m$ such
that $(t,z) \in (t_{i^*}, t_{i^*+1}]\times Z_{k^*}$. That is, $A_{mn}(f)$ is the
average of $f$ over $(t_{i^*-1},t_{i^*}]\times Z_k$. So by the very definition of
the average, there exists a point $(t_0,z_0)$,
with $t_0 \in (t_{i^*-1},t_{i^*}]\times Z_k$, such that the value of
$A_{mn}(f)$ is lower than the value of $|f|$ at that point.
 \end{enumerate}
 This implies that $\|A_{mn}(f) \|_{\infty} \leq \| f\|\infty$.
 \item To alleviate the notation, let us write
 $$c_{ik} = \frac{1}{\mu((t^n_{i-1},t^n_i]\times Z_k,
\omega)}\left(\int_{(t^n_{i-1},t^n_{i}]\times
Z_j} f(s,y,\omega) \mu(ds dz, \omega) \right).$$
Thus, for all $\omega$
\begin{equation*}
 c^2_{ik} \leq \frac{1}{\mu((t^n_{i-1},t^n_i]\times Z_k,\omega)}
\int_{(t_{i-1},t_i]
\times Z_k} f^2(s, y, \omega) \mu(ds\ dy, \omega)
\end{equation*}
by Cauchy-Schwarz.
\end{enumerate}
Since all the  $[t^n_{i-1}(Z_k,\omega),t^n_{i}(Z_k,\omega)] \times Z_k$ are disjoint,
$$A_{mn}^2(f) = \sum\limits_{i=1}^{2^{2^n}-1}\sum\limits_{k=1}^m c^2_{ik}
\mathds{1}_{(t_{i},t_{i+1}]}(t) \mathds{1}_{Z_k}(z).$$
Integrating,
\begin{align*}
E[\int_{[0,T]\times \mathbb{R}^d_0} &A_{mn}(f) (s,y,\omega) \mu(ds
dy,\omega)]\\ &= E[\sum\limits_{i=1}^{2^{2^n}-1}\sum\limits_{j=1}^m
\frac{\mu((t^n_{i},t^n_{i+1}]\times Z_j)}{\mu((t^n_{i-1},t^n_{i}]\times
Z_j)}(\int_{(t^n_{i-1},t^n_{i}]\times
Z_j} f(s,y,\omega) \mu(ds dz, \omega))].
\end{align*}
Now, we have two cases to consider:
\begin{enumerate}
 \item for a given $k$, if $t^n_{2^{2^n}}(Z_k,\omega) < T$, then by construction,
for all $i$, 
$$\mu((t^n_{i},t^n_{i+1}]\times Z_k) = \mu((t^n_{i-1},t^n_{i}]\times Z_k)=
2^{-n}; $$
\item otherwise, there exists an index $i^*(k)$ such that $t^n_{i^*(k)} <T$
and $t^n_i = T$ for all $i \geq i^*(k)$. Hence for a given $k$, all the terms
of time index greater than $i^{*}(k)$ are null. Besides, this implies two
things:
\begin{align*}
 &\forall i < i^{*}(k), \mu((t^n_{i},t^n_{i+1}]\times Z_k) =
\mu((t^n_{i-1},t^n_{i}]\times Z_k)= 2^{-n},\\
&  \mu((t^n_{i^{*}(k)},t^n_{i^{*}(k)+1}]\times Z_k) \leq
\mu((t^n_{i^{*}(k)-1},t^n_{i^{*}(k)}]\times Z_k) = 2^{-n}.
\end{align*}
\end{enumerate}
In any case, we obtain
$$\|A_{mn}^2(f)\|_{\mathcal{L}^2_{\mathbb{P}}(\mu)} \leq 
E[\sum\limits_{i=1}^{2^{2^n}-1}\sum\limits_{j=1}^m
(\int_{(t^n_{i-1},t^n_{i}]\times
Z_j} f^2(s,y,\omega) \mu(ds dz, \omega))] \leq \|f\|_{\mathcal{L}^2_{\mathbb{P}}(\mu)}. $$
\begin{remark}
 We can see why we needed to define the $t_i^n$ as a random partition: it
is so that the operator $A_{mn}(f)$ defines a contraction in
$\mathcal{L}^2_{\mathbb{P}}(\mu)$. In fact, this is actually the only
reason; in the case where $\mu$ is a L\'evy compensator for example, the fact
that $\mu$ is time-homogeneous and independent of $\omega$ would allow taking the
partition deterministic and Lebesgue-equidistant in time. If $\mu$ is
deterministic but time-inhomogeneous, then  $t^n_i$ may be chosen
deterministic and $\mu$-equidistant.
\end{remark}

\end{proof}

We are now ready to prove:
\begin{lemma}
For a bounded function $f$,
\label{mainlemma}
 $$\lim_{n\to \infty} \|A_{mn}(f) - f\|_{\mathcal{L}^2_{\mathbb{P}}(\mu)}
=0.$$
\end{lemma}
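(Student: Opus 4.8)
The plan is to combine the two a priori bounds already established for the averaging operators — the sup-norm estimate and, decisively, the $\mathcal{L}^2_{\mathbb{P}}(\mu)$-contraction $\|A_{mn}(f)\|_{\mathcal{L}^2_{\mathbb{P}}(\mu)}\leq\|f\|_{\mathcal{L}^2_{\mathbb{P}}(\mu)}$ — with an elementary convergence on a regular dense subclass, and then to upgrade to all $f$ by a three-$\varepsilon$ argument. Since each $A_{mn}$ is linear and a contraction uniformly in $n$, for an arbitrary $f$ and any approximant $\tilde f$ (writing $\|\cdot\|$ for the $\mathcal{L}^2_{\mathbb{P}}(\mu)$-norm),
\[
\|A_{mn}(f)-f\|\leq \|A_{mn}(f-\tilde f)\| + \|A_{mn}(\tilde f)-\tilde f\| + \|\tilde f-f\|\leq 2\|f-\tilde f\| + \|A_{mn}(\tilde f)-\tilde f\|,
\]
so that it suffices to prove $\|A_{mn}(\tilde f)-\tilde f\|\to 0$ for $\tilde f$ ranging over a class whose span is dense, the remaining terms being absorbed by density together with the contraction bound.

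For the dense subclass I would take finite sums of product fields $\tilde f(t,z,\omega)=g(\omega)\,h(t)\,\mathds{1}_{Z_k}(z)$, with $g$ bounded and $\mathcal{F}$-measurable, $h$ continuous on $[0,T]$, and $k\leq m$; these are dense among the bounded fields that are $\mathds{1}_{Z_k}$-simple in $z$ and supported on $[0,T]\times\bigcup_{k\leq m}Z_k$. On such $\tilde f$ the operator acts purely by time-averaging: on each interval $(t^n_{i},t^n_{i+1}]$ it returns $g$ times the $\mu$-average of $h$ over the preceding cell $(t^n_{i-1},t^n_{i}]$. The essential input is the Assumption of absolute continuity in time: because $\mu(\,\cdot\,\times Z_k,\omega)\ll ds$, the measure is non-atomic in time and every cell carries $\mu$-mass exactly $2^{-n}$ (save possibly the last), so as $n\to\infty$ the cells collapse in time wherever $\mu$ charges, while the truncation level $2^n\to\infty$ exhausts the total $\mu$-mass of $[0,T]\times Z_k$. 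Continuity of $h$ then yields pointwise convergence of the cell-averages to $h(t)$ $\mu$-a.e., and the sup-norm bound supplies the domination required to conclude by dominated convergence in $\mathcal{L}^2_{\mathbb{P}}(\mu)$.

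The main obstacle is the predictable one-cell lag built into $A_{mn}$: the value assigned on $(t^n_{i},t^n_{i+1}]$ is the average over the \emph{previous} cell $(t^n_{i-1},t^n_{i}]$, not the current one, so $A_{mn}$ is not itself an orthogonal projection. I would dispose of this by comparing $A_{mn}$ with the genuine conditional expectation $B_{mn}$ onto the $\sigma$-algebra $\mathcal{G}_{mn}$ generated by the cells $(t^n_{i-1},t^n_{i}]\times Z_k$ (no shift). The $\mathcal{G}_{mn}$ increase with $n$ — each level-$n$ time-breakpoint, sitting at $\mu$-mass $i2^{-n}=2i\cdot 2^{-(n+1)}$ and below the growing cap, is again a breakpoint at level $n+1$ — so $B_{mn}(\tilde f)\to \tilde f$ is a clean $L^2$-martingale-convergence statement (the limit resolving the time direction fully by non-atomicity). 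It then remains to show $\|A_{mn}(\tilde f)-B_{mn}(\tilde f)\|\to 0$, the difference being exactly a shift by one vanishing cell, controlled again by the absolute-continuity assumption. A secondary point, which is what dictates the choice of dense class, is that $A_{mn}(f)$ is constant in $z$ on each $Z_k$: the $z$-direction is therefore resolved not by $A_{mn}$ but by refining the partition $\{Z_k\}$ (equivalently, by restricting the approximants to be $Z_k$-measurable in $z$ and handling the spatial resolution together with the limit $m\to\infty$ in the proof of Theorem \ref{thmdensity}).
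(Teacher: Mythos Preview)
Your proposal is correct and matches the paper's strategy: both reduce via the contraction bound to a dense class, introduce the unshifted averaging operator $B_{mn}$ as a genuine conditional expectation along the increasing cell $\sigma$-fields, apply $L^2$-martingale convergence to $B_{mn}$, and then control the one-cell predictable lag. The one tactical difference is the choice of approximant: rather than continuous-in-time product fields, the paper takes $\tilde f = B_{ml}(f)$ for a fixed coarser level $l$, so that for $n\geq l$ one has $B_{mn}(B_{ml}(f))=B_{ml}(f)$ by grid refinement and the discrepancy $A_{mn}(B_{ml}(f))-B_{ml}(f)$ is nonzero on at most one fine cell per coarse breakpoint, hence on a set of $\mu$-mass $O(2^{2^l}\cdot 2^{-n})\to 0$. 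This turns the lag estimate into a bare grid computation (the paper's Lemma~\ref{annexlemma3}) and bypasses the pointwise cell-shrinking argument your continuous-$h$ route would otherwise require.
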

Proof:  We start by introducing the operator
 $$B_{mn}(f)(t,z) :=   \sum\limits_{i=1}^{2^{2^n}}
\sum\limits_{k=1}^m
\frac{\mathds{1}_{(t^n_{i-1},t^n_i]}(t)\mathds{1}_{Z_k}(z)}{\mu((t^n_{i-1}(Z_k,\omega),t^n_i(Z_k,\omega)]\times Z_k, \omega)}\left(\int_{(t^n_{i-1},t^n_{i}]\times
Z_k} f(s,y,\omega) \mu(ds dz, \omega) \right).$$
 Notice that $B_{mn}(f)$ is not a simple predictable process: it is actually
anticipative. However, $$(B_{mn}(f)(.,*,\omega))_{n \geq 0}$$ is a martingale.

\begin{lemma}
\label{annexlemma1}
Fix $\omega$ and define a  probability space
$(\Omega',\mathcal{F}'_{\omega,m},\mathbb{Q}_{\omega,m})$ with
\begin{align*}
 \Omega'_{\omega, m} &= \{\omega\}\times [0,T] \times \cup_{k=1}^m Z_k,\\
 \mathcal{F}'_{\omega,m} &= \{(\omega, A\times B), A\in \mathcal{B}([0,T]), B
\in \mathcal{B}(\cup_{j=1}^m Z_k)\},\\
\mathbb{Q}_{\omega, m}(A \times B) &= \frac{\mu(A \times B, \omega)}{\mu([0,T]
\times \cup_{k=1}^m Z_k,\omega)}, 
\end{align*}
which we equip with the smallest filtration $(\mathcal{F}^m_n)_{n \in
\mathbb{N}}$ that makes the functions 
$$(t, z) \mapsto \sum\limits_{i=1}^{2^{2^n}} c_i \mathds{1}_{(t^n_{i-1},
t^n_{i}]}(t)\mathds{1}_{Z_k}(z) $$
measurable for all $k$ in $1..m$. Then  $$(B_{mn}(f)(.,*,\omega)_{n \geq 0}$$ is a $\left(\ (\mathcal{F}^m_n)_{n \in
\mathbb{N}}, \mathbb{Q}_{\omega, m }\right)$-martingale.
\end{lemma}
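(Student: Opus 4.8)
The plan is to recognise $B_{mn}(f)(\cdot,\ast,\omega)$ as the conditional expectation of the fixed random variable $f(\cdot,\ast,\omega)$ on the space $(\Omega'_{\omega,m},\mathcal{F}'_{\omega,m},\mathbb{Q}_{\omega,m})$ with respect to $\mathcal{F}^m_n$; once this identification is made, the martingale property follows at once from the tower property of conditional expectations along an increasing filtration. First I would observe that $\mathbb{Q}_{\omega,m}$ is just $\mu(\cdot,\omega)$ normalised by the total mass $\mu([0,T]\times\cup_{k=1}^m Z_k,\omega)$, which is finite by the $\sigma$-finiteness of Assumption \ref{asB}; hence integrating against $\mathbb{Q}_{\omega,m}$ and against $\mu(\cdot,\omega)$ differ only by this constant, and boundedness of $f$ guarantees $f\in L^1(\mathbb{Q}_{\omega,m})$.

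Next I would identify the atoms of $\mathcal{F}^m_n$: by construction this is the finite $\sigma$-algebra generated by the rectangles $(t^n_{i-1},t^n_i]\times Z_k$ as $i,k$ range over their index sets. On each such atom $C=(t^n_{i-1},t^n_i]\times Z_k$ the process $B_{mn}(f)$ is constant and equal to $\frac{1}{\mu(C,\omega)}\int_{C} f\,d\mu(\cdot,\omega)$, i.e. the $\mathbb{Q}_{\omega,m}$-average of $f$ over $C$. This gives both required properties at once: $B_{mn}(f)$ is $\mathcal{F}^m_n$-measurable, and $\int_C B_{mn}(f)\,d\mathbb{Q}_{\omega,m}=\int_C f\,d\mathbb{Q}_{\omega,m}$ for every atom $C$. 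Together these are precisely the characterising properties of $E_{\mathbb{Q}_{\omega,m}}[f\mid\mathcal{F}^m_n]$, so $B_{mn}(f)=E_{\mathbb{Q}_{\omega,m}}[f\mid\mathcal{F}^m_n]$.

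Finally I would check that the filtration is increasing. Since $i2^{-n}=(2i)2^{-(n+1)}$, every level-crossing time $t^n_i$ reappears among the $t^{n+1}_j$, so the partition at level $n+1$ refines that at level $n$ and $\mathcal{F}^m_n\subset\mathcal{F}^m_{n+1}$. The martingale identity then reads, by the tower property,
$$
E_{\mathbb{Q}_{\omega,m}}\!\left[B_{m,n+1}(f)\mid\mathcal{F}^m_n\right]=E_{\mathbb{Q}_{\omega,m}}\!\left[f\mid\mathcal{F}^m_n\right]=B_{mn}(f),
$$
which is exactly the claim.

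The statement is essentially routine once the conditional-expectation viewpoint is in place, so the only genuinely delicate point is the bookkeeping for the random level-crossing times: I would have to argue carefully that the random partitions nest (rather than merely that deterministic dyadic levels nest), and handle the degenerate atoms where $t^n_{i-1}=t^n_i$ via the convention $0/0=0$, together with the index at which the $t^n_i$ saturate to $T$. Ensuring that $B_{mn}(f)$ is a bona fide version of the conditional expectation on \emph{every} atom, including the $\mu$-null ones, is the main place requiring attention.
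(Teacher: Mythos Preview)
Your proposal is correct and follows essentially the same approach as the paper: both identify $B_{mn}(f)(\cdot,\ast,\omega)$ with the conditional expectation $E_{\mathbb{Q}_{\omega,m}}[f\mathds{1}_{\cup_k Z_k}\mid \mathcal{F}^m_n]$ and conclude the martingale property from the tower rule. You are somewhat more explicit than the paper in checking that the dyadic level-crossing partitions nest (hence $(\mathcal{F}^m_n)_n$ is increasing) and in flagging the handling of degenerate atoms, but these are elaborations of the same argument rather than a different route.
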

\begin{proof}
Notice that in general $f$ is not
$\mathcal{F}_{\omega,m}$-measurable. However, $f \mathds{1}_{\cup_{k=1}^m Z_k}(z)$ is.
Then, since conditional expectation is just orthogonal projection,
\begin{align*}
 E[f(t,z)\mathds{1}_{\cup_{k=1}^m
Z_k}(z)| \mathcal{F}^m_n](t,z) &= \sum_{i = 1}^{2^{2^n}-1}\sum_{k=1}^m
\frac{\mathds{1}_{(t^n_{i-1},t^n_i]}(t) \mathds{1}_{Z_k}(z)}{\mu((t^n_{i-1},
t^n_{i}]\times Z_k,\omega)} \int_{(t^n_{i-1}, t^n_i]\times Z_k} f(s,y,\omega) \mu(ds\ dy,
\omega)\\ &= B_{mn}(f)(t,z).
\end{align*}
So $(B_{mn}(f))_{n \geq 0}$ is indeed a $\mathcal{F}^m_n$-martingale.
\end{proof}

\begin{lemma}
\label{annexlemma2}
$$\forall f\in\mathcal{L}^2_{\mathbb{P}}(\mu),\qquad \lim_{n \to \infty}\|B_{mn}(f) - f \mathds{1}_{\cup_{k
=1}^m Z_k}(.)\|_{\mathcal{L}^2_{\mathbb{P}}(\mu)} = 0.$$
\end{lemma}

\begin{proof}
For fixed $\omega$, $\{B_{mn}(f)(.,*,\omega),n \geq 0\}$ being a
martingale (by Lemma \ref{annexlemma1}) allows us to apply the $L^2(Q)$-bounded martingale
convergence theorem to conclude that $B_{mn}(f)(.,*,\omega)$ converges in
$L^2(Q)$ except perhaps on a $\mu$-null set. We note
this limit $B_{m\infty}(f)$.

Moreover, since $f$ is bounded, dominated convergence gives
$$\lim_{n \to \infty} \int_{A \times B} B_{mn}(f)(s,y,\omega) \mu(dt dz, \omega)
=\int_{A\times B} B_{m\infty}(f) (s,y,\omega) \mu(dt dz, \omega)$$
for all $A \times B \in \mathcal{B}([0,T]\times \cup_{k=1}^m Z_k)$.
By definition of the operator $B_{mn}(f)$, we know that 
$$\int_{A \times B} B_{mn}(f)(s,y,\omega) \mu(ds\ dy, \omega) = \int_{A \times
B} f(s,y,\omega) \mu(ds\ dy, \omega)$$ for all $A\times B$ such that $(\omega, A
\times B)\in \mathcal{F}^m_a$ ($a \in \mathbb{N}$) and all $n \geq a$. L\'evy's
zero-one law gives $B_{m \infty} = f\mathds{1}_{\cup_{k=1}^m Z_k}$
$d\mu(.,\omega)$-a.e.

Finally, by dominated convergence:
$$\lim_{n\to \infty} \int_{[0,T] \times \cup_{k=1}^m Z_k}
|B_{mn}(f)(s,y,\omega)-f(s,y,\omega)\mathds{1}_{\cup_{k=1}^m Z_k}(z)|^2
\mu(ds\ dy,\omega).$$
The result follows by  applying  a dominated convergence criterion.
\end{proof}

\begin{lemma}
\label{annexlemma3}
Let $f$ be bounded and $\mathcal{L}^2_{\mathbb{P}}(\mu)$-integrable.
For any $m$ and any fixed $l$,
$$\lim_{n \to \infty} A_{mn}(B_{ml}(f))(t,z,\omega) = B_{ml}(f)(t,z,\omega),
d\mathbb{P}\times d\mu-a.e.$$
\end{lemma}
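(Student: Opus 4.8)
The plan is to exploit the fact that $A_{mn}$ is nothing but a one-block time-shift of the martingale-averaging operator $B_{mn}$ from Lemma~\ref{annexlemma1}, together with the nestedness of the random partitions $(t^n_i)_i$. First I would record that, since $t^n_i(Z_k,\omega)$ is the level-crossing time of $s\mapsto\mu([0,s]\times Z_k,\omega)$ at height $i2^{-n}$ and $i2^{-n}=(2i)2^{-(n+1)}$, the grid of the $n$-th partition is contained in that of the $(n+1)$-th; equivalently the filtrations $(\mathcal{F}^m_n)_n$ of Lemma~\ref{annexlemma1} are increasing and each $l$-block boundary $t^l_j$ is also an $n$-grid point for every $n\ge l$. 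In particular $g:=B_{ml}(f)$ is $\mathcal{F}^m_l$-measurable, hence constant in $(t,z)$ on every refined block $(t^n_{i-1},t^n_i]\times Z_k$ lying inside a single $l$-block, and $B_{mn}(g)=E[g\mid\mathcal{F}^m_n]=g$ $\mu$-a.e. for all $n\ge l$.

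Next I would read off the pointwise identity relating the two operators: by their definitions, for $(t,z)\in(t^n_i,t^n_{i+1}]\times Z_k$ the value $A_{mn}(g)(t,z)$ equals the $\mu$-average of $g$ over the \emph{previous} block $(t^n_{i-1},t^n_i]\times Z_k$, i.e. the value of $B_{mn}(g)$ on that block. Using $B_{mn}(g)=g$ for $n\ge l$, this average is exactly the (constant) value of $g$ on $(t^n_{i-1},t^n_i]\times Z_k$. Consequently $A_{mn}(g)(t,z)=g(t,z)$ unless the previous block and the current block fall into two different $l$-blocks, which by nestedness can only happen when $t$ lies in the very first $n$-subinterval $(t^l_{j-1},t^n_{a(n)+1}]$ immediately to the right of the left endpoint $t^l_{j-1}$ of the $l$-block containing $t$, where $a(n)$ is the $n$-index of that endpoint.

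Then I would show this residual ``bad set'' collapses to a $\mu$-null set in the pointwise-a.e. sense. Writing $G(s):=\mu([0,s]\times Z_k,\omega)$, which is continuous and nondecreasing because $\mu\ll ds$ in time, the endpoint satisfies $t^n_{a(n)+1}=\inf\{s:G(s)\ge G(t^l_{j-1})+2^{-n}\}\downarrow\tau:=\inf\{s:G(s)>G(t^l_{j-1})\}$ as $n\to\infty$, with $G(\tau)=G(t^l_{j-1})$ by continuity. If $G(t)>G(t^l_{j-1})$ then $t>\tau$, so $t^n_{a(n)+1}<t$ for all large $n$ and $(t,z)$ is eventually \emph{not} in the bad set, giving $A_{mn}(g)(t,z)=g(t,z)$ for all large $n$; the complementary set $\{(s,z):z\in Z_k,\ s>t^l_{j-1},\ G(s)=G(t^l_{j-1})\}\subseteq(t^l_{j-1},\tau]\times Z_k$ carries no $\mu$-mass since $\int_{(t^l_{j-1},\tau]}dG=G(\tau)-G(t^l_{j-1})=0$. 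Summing over the finitely many block boundaries $j\le 2^l$ and indices $k\le m$ (and discarding the $\mu$-null regions past the $T$- and $2^n$-caps, which are inactive once $2^n>\mu([0,T]\times Z_k,\omega)<\infty$), the union of bad sets is $\mu$-null, and for every $(t,z)$ outside it $A_{mn}(B_{ml}(f))(t,z)=B_{ml}(f)(t,z)$ eventually, hence in the limit.

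The main obstacle is this last step: one must argue pointwise (not merely in $L^2$ or in measure), so the crux is controlling the first refined subinterval after each coarse boundary and verifying that the time set on which it fails to shrink below a fixed $t$ is exactly a flat stretch of $G$, which carries zero $\mu$-mass. The continuity of $G$, i.e. the absolute continuity of $\mu$ in time, is precisely what makes this work; without it the level-crossing endpoints could jump and the a.e. statement would demand a more delicate treatment of the exceptional times.
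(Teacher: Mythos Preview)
Your proposal is correct and follows essentially the same route as the paper's proof: both exploit the nestedness of the random grids for $n\ge l$, observe that $A_{mn}$ acts on $g:=B_{ml}(f)$ as a one-block backward shift so that $A_{mn}(g)=g$ except on the first $n$-subinterval immediately following each coarse boundary $t^l_j$, and then argue that this exceptional set becomes $\mu$-negligible as $n\to\infty$ thanks to the absolute continuity of $\mu$ in time. Your treatment of that last point, via the level-crossing description of $t^n_{a(n)+1}$ and the observation that the only times not eventually escaping the bad set lie on flat stretches of $G(s)=\mu([0,s]\times Z_k,\omega)$ which carry no $\mu$-mass, is in fact more explicit than the paper's corresponding step.
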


\begin{proof}
For that purpose, we expand $A_{mn}(B_{ml}(f))$, with $n \geq l$:
\begin{align}
\label{expansion}
 &A_{mn}(B_{ml}(f))(t,z)\\ 
 \nonumber
 &= \sum\limits_{i = 1}^{2^{2^n}-1}\sum\limits_{k = 1}^m \sum\limits_{p =
1}^{2^{2^l}}\sum\limits_{q =
1}^m\frac{\mathds{1}_{(t^n_{i-1},t^n_i]}(t)\mathds{1}_{Z_k}(z)}{\mu((t^n_{i-1},
t^n_i]\times Z_k).\mu((t^l_{p-1},
t^l_p]\times Z_q)}.\mu(((t^n_{i-1},t^n_i] \times Z_k) \cap((t^l_{p-1},t^l_p]
\times Z_q))) \\ \nonumber&\quad \quad \quad.\int_{(t^l_{p-1},t^l_p] \times Z_q} f(s,y,\omega) \mu(ds\ dy,
\omega).
\end{align}
We now note two things: first, for $q \neq k$, 
$$\mu(((t^n_{i1},t^n_{i+1}] \times Z_k) \cap((t^l_{p-1},t^l_p]
\times Z_q))) = 0.$$
Moreover, recall that the time grid is refining. This means that since $n \geq
l$, the $(t^l_i)_{1 \leq i \leq 2^{2^l}}$ are a subset of $(t^n_i)_{1 \leq i \leq
2^{2^n}}$. So
$$\mu(((t^n_{i-1},t^n_i] \times Z_k) \cap((t^l_{p-1},t^l_p]
\times Z_k)))$$
 is either equal to $\mu((t^n_{i-1},t^n_i] \times Z_k)$ or zero.
 So (\ref{expansion}) is 
\begin{equation*}
 \sum\limits_{i = 1}^{2^{2^n}-1}\sum\limits_{k = 1}^m \sum\limits_{p =
1}^{2^{2^l}}\frac{\mathds{1}_{(t^n_{i},t^n_{i+1}]}(t)\mathds{1}_{Z_k}(z)}{
\mu((t^l_ {
p-1},
t^l_p]\times Z_k)}.\mathds{1}_{\{(t^n_i,t^n_{i+1}]\subset(t^l_{p-1},t^l_{p}]
\}}.\int_{(t^l_{p-1},t^l_p] \times Z_k} f(s,y,\omega) \mu(ds
dy,
\omega).
\end{equation*}
 By the above, this is almost $B_{ml}(f)$. In fact
 \begin{align*}
  A_{mn}(&B_{ml}(f))(t,z,\omega)\\
  &= B_{ml}(f)(t,z,\omega) \mathds{1}_{t \not\in
\cup_{i=1}^l [t^l_{i},t^l_{i} +t^n_{2^{i(m-l)}+1}]} +
\sum\limits_{i=1}^{2^{2^n}} B_{ml}(f)(t-t^n_{2^{i(m-l)}+1},z,\omega)
\mathds{1}_{t \in[t^l_{i},i t^l_{i} +t^n_{2^{i(m-l)}+1}]}
 \end{align*}
 
We note two things. First,
\begin{equation}
 \lim_{n \to \infty} A_{mn}(B_{ml}(f))(t,z,\omega) = B_{ml}(f)(t,z,\omega)
\end{equation}
for all $(t,z,\omega)$ such that $t \neq t^l_i$, $i\in [0..2^{2^n}-1]$). Second, for $n \geq l$,
\begin{equation}
\label{bdd_dens_1}
|A_{mn}(B_{ml}(f))(t,z,\omega)|\leq |B_{ml}(f)(t,z, \omega)|
+|B_{ml}(f)(t-2^{-n}T,z, \omega)|.
\end{equation}

By the triangle inequality, we have
\begin{equation*}
|A_{mn}(B_{ml}(f))(t,z,\omega)-B_{ml}(f)(t,z, \omega)|\leq |A_{mn}(B_{ml}(f))(t,z,\omega)| + |B_{ml}(f)(t,z, \omega)|.
\end{equation*}
Squaring both sides, integrating with respect to $\mathbb{P}\times \mu$, and using Equation \ref{bdd_dens_1},
we see that the left-hand side is dominated by a fixed integrable function. By dominated convergence once more, we
finally obtain
\begin{equation}
 \lim_{n \to \infty}
\|A_{mn}(B_{ml})(f)-B_{ml}(f)\|_{\mathcal{L}^2_{\mathbb{P}}(\mu)} =0.
\end{equation}
\end{proof}

\begin{proof}(Proof of Lemma \ref{mainlemma})
For $f$ bounded, we are now ready to prove that
\begin{equation*}
 \lim_{n \to \infty}\| A_{mn}(f) - f\|_{\mathcal{L}^2_{\mathbb{P}}(\mu)} = 0.
\end{equation*}
We have 
\begin{align*}
 \| A_{mn}(f) - f\|_{\mathcal{L}^2_{\mathbb{P}}(\mu)} &\leq 
 \|A_{mn}(f-f\mathds{1}_{. \in \cup_{k=1}^m
Z_k})\|_{\mathcal{L}^2_{\mathbb{P}}(\mu)} 
 +\|A_{mn}(f\mathds{1}_{. \in \cup_{k=1}^m
Z_k}-B_{ml}(f))\|_{\mathcal{L}^2_{\mathbb{P}}(\mu)} \\
&\qquad+ \|A_{mn}(B_{ml}(f)) -f\|_{\mathcal{L}^2_{\mathbb{P}}(\mu)}\\
&\leq \|f-f\mathds{1}_{. \in \cup_{k=1}^m
Z_k})\|_{\mathcal{L}^2_{\mathbb{P}}(\mu)} 
+\|f\mathds{1}_{. \in \cup_{k=1}^m
Z_k}-B_{ml}(f)\|_{\mathcal{L}^2_{\mathbb{P}}(\mu)} \\
&\qquad+ \|A_{mn}(B_{ml}(f)) -f\|_{\mathcal{L}^2_{\mathbb{P}}(\mu)},
\end{align*}
as $A_{mn}$ is a contraction. So by Lemmas \ref{annexlemma2} and \ref{annexlemma3},  for all
$l$,
\begin{align*}
 \limsup_{n \to \infty}
\|A_{mn}(B_{ml})(f)-B_{ml}(f)&\|_{\mathcal{L}^2_{\mathbb{P}}(\mu)}\\ \leq
&\|f-f\mathds{1}_{. \in \cup_{k=1}^m
Z_k}\|_{\mathcal{L}^2_{\mathbb{P}}(\mu)}+ 2 \|f\mathds{1}_{. \in
\cup_{k=1}^m Z_k}-B_{ml}(f)\|_{\mathcal{L}^2_{\mathbb{P}}(\mu)} .
\end{align*}

Now, since $l$ is arbitrary, the previous expression yields
\begin{equation*}
 \lim_{n \to \infty}\| A_{mn}(f) - f\|_{\mathcal{L}^2_{\mathbb{P}}(\mu)}
\leq \|f-f\mathds{1}_{. \in \cup_{k=1}^m
Z_k}\|_{\mathcal{L}^2_{\mathbb{P}}(\mu)}
\end{equation*}
since we know $B_{ml}(f) \to f\mathds{1}_{. \in \cup_{k=1}^m
Z_k}$ when $l \to \infty$.
This holds for any $m$. Letting $m\to \infty$, dominated convergence gives 
$$ \|f-f\mathds{1}_{. \in \cup_{k=1}^m
Z_k})\|_{\mathcal{L}^2_{\mathbb{P}}(\mu)} \underset{m\to\infty}{\longrightarrow} 0,$$
which concludes the proof for $f$ bounded.
\end{proof}

Lemma \ref{mainlemma} shows that any bounded $f\in\mathcal{L}^2_{\mathbb{P}}(\mu)$ may be approximated by elements of $\mathfrak{R}$.
To finish the proof of Theorem \ref{thmdensity}, we consider an unbounded $f$. We can approximate $f$ by bounded $f^n$
as follows.
Write
\begin{equation*}
 f^n(s,y,\omega) = f(s,y,\omega) \mathds{1}_{|f(s,y,\omega)|\leq n}.
\end{equation*}
We now prove that $f^n$ converges to $f$ pointwise everywhere.
\begin{align*}
 \mathbb{P}\circ\mu&(\bigcup_{\epsilon\in \mathbb{R}^+\cap \mathbb{Q}}
\bigcap_{n_0 \in \mathbb{N}^*}\bigcup_{n \geq n_0}
\left\{(t,z,\omega):|f_n(t,z,\omega)-f(t,z,\omega)|>\epsilon \right\})\\
&=\mathbb{P}\circ\mu(
\bigcap_{n_0 \in \mathbb{N}}\bigcup_{n \geq n_0}
\left\{(t,z,\omega):|f(t,z,\omega)|>n \right\})\\
&=\mathbb{P}\circ\mu(
\bigcap_{n_0 \geq 1}\bigcup_{n \geq n_0}
\left\{(t,z,\omega):|f(t,z,\omega)|>n \right\}).
\end{align*}
On the other hand,
$$\sum\limits_{n=1}^\infty (\mathbb{P}\circ\mu)(|F(t,z,\omega)|>n)\leq
\sum_{n \geq 1} \frac{\|f(t,z,\omega)\|^2}{n^2},$$
using the Chebyshev-Markov inequality. In particular, the series converges. This
implies that 
$$\inf_{N \geq 1} \sum\limits_{n \geq N}
(\mathbb{P}\circ\mu)(|f(t,z,\omega)|>n)=0.$$
Hence
\begin{align*}
P(\bigcap_{n_0 \geq 1}\bigcup_{n \geq n_0}
\left\{(t,z,\omega):|f(t,z,\omega)|>n \right\})
&\leq \inf_{N \geq 1}
(\mathbb{P}\circ\mu)(|f(t,z,\omega)|>n)\\
&\leq \inf_{N \geq 1}
(\mathbb{P}\circ\mu)(\cup_{n \geq 1}|f(t,z,\omega)|>n)\\
&\leq\inf_{N \geq 1}
\sum\limits_{n \geq N}
(\mathbb{P}\circ\mu)(|f(t,z,\omega)|>n)=0
\end{align*}

\begin{remark} The proof also applies to the case where
  $\mu$ is a point mass at zero i.e.
 $$m(ds\ dy,\omega) = \mathds{1}_{0}(dy) n(\{s\}) ds,$$ which then enables to show  the
density of processes of the form
$$\phi(s,\omega) = \sum\limits_{i=0}^I \phi_i(\omega)
\mathds{1}_{(t_i(\omega),t_{i+1}(\omega)]}(t), $$
in $\mathcal{L}^2_{\mathbb{P}}(\text{Leb}([0,T]))$, with $\phi_i$ $\mathcal{F}_{t_i}$
measurable and the $t_i$ some stopping times.
 \end{remark}

\bibliographystyle{siam}
\bibliography{biblio_MRF}

\end{document}